\title{Sections and Chapters}
\title{\bf $L_{1}$- Properties of vector-valued Banach algebras}
\author{Maryam Aghakoochaki  $^1$, \thanks{2020 Mathematics Subject Classifcation. Primary: 46J05; Secondary: 46J10} ,  \and Ali Rejali $^2$, \thanks{Corresponding author}}
\date{
	$^1$Isfahan University  \\ \texttt{mkoochaki@sci.ui.ac.ir, Orcid: 0000-0002-3851-6550}\\%
	$^2$Department of Pure Mathematics, Faculty of Mathematics and Statistics, University of Isfahan, Isfahan 81746-73441, Iran \\ \texttt{rejali@sci.ui.ac.ir, Orcid: 0000-0001-7270-665X}\\[2ex]%
\today
}
\newcommand{\tnorm}[1]{{\left\vert\kern-0.25ex\left\vert\kern-0.25ex\left\vert #1 
    \right\vert\kern-0.25ex\right\vert\kern-0.25ex\right\vert}}
\theoremstyle{plain}
\newtheorem{thm}{Theorem}[section]
\newtheorem{cor}[thm]{Corollary}
\newtheorem{lem}[thm]{Lemma}
\theoremstyle{definition}
\newtheorem{rem}[thm]{\bf Remark}
\newtheorem{ex}[thm]{\bf Example}
\newtheorem{DEF}[thm]{\bf Definition}
\begin{document}
\maketitle

\begin{abstract}
		Let   $G$ be a locally compact group and $A$ be a commutative semisimple Banach algebra over the scalar field $\mathbb{C}$.
The correlation between different types of  $\textup{BSE}$- Banach algebras $A$, and the Banach algebras $L^{1}(G, A)$  are assessed. It is found and approved that 
$M(G, A) = L^{1}(G, A)$  if and only if $G$ is discrete. Furthermore, some properties of vector-valued measure algebras on groups are given, so that $M(G, A)$ is a 
convolution measure algebra.
		
\noindent\textbf{Keywords:} Banach algebra,  $\textup{BSE}$-  algebra,   $\textup{BED}$- algebra, $C^{*}$- algebra, measure algebra.
\end{abstract}

\section{Introduction}
 In this paper, $A$ is a commutative Banach algebra with the dual space $A^{*}$ and $G$ be a locally compact group.
Let  $\Delta(A)$ be the character space of $ A$ with the Gelfand topology. In fact,  $\Delta(A)$ is the set consisting of all non-zero multiplicative
linear functionals on $A$.  We denote by $C_{b}(\Delta(A))$ the Banach algebra of all complex-valued continuous and bounded functions
on $\Delta(A)$, with sup-norm $\|.\|_{\infty}$ and pointwise product. The Gelfand map $A\to C_{b}(\Delta(A))$ is a  continuous algebra homomorphism on  $A$, where $a\mapsto \hat a$ such that $\hat{a}(\varphi)= \varphi(a)$; ($\varphi\in \Delta(A)$).
  If the Banach algebra $A$ is  semisimple, then the Gelfand map $\Gamma_{A}: A\to\widehat A$, $a\mapsto \hat a$, is injective.
Let $A$ be a semisimple Banach algebra. Then $\Delta(A)$ is compact if and only if $A$ is unital; See \cite{kan}.
A continuous linear operator $T$ on $A$ is named a multiplier if for all $x,y\in A$, $T(xy)=xT(y)$; See \cite{kan}.
The set of all multipliers on $A$ will be denoted by $M(A)$. It is obvious that $M(A)$ is a Banach algebra, and if $A$ is an unital Banach algebra, then 
$M(A)\cong A$.  As observed in   \cite{klar},  for each $T\in M(A)$ there exists a unique bounded  continuous function $\widehat{T}$ on $\Delta(A)$ expressed as:
$$\varphi(Tx)=\widehat{T}(\varphi)\varphi(x),$$
for all $x\in A$ and $\varphi\in \Delta(A)$. By setting 
$$
M(A\widehat)= \{\hat T: ~ T\in M(A)\}
$$
and 
$$
{\cal M}(A):= \{\sigma\in C_{b}(\Delta(A): ~ \sigma. \hat A\subseteq \hat A\}.
$$
If $A$ is semisimple, then ${\cal M}(A)= M(A\widehat)$.
A bounded complex-valued continuous function $\sigma$ on $\Delta(A)$  is named a $\textup{BSE}$- function, if there exists a positive real number $\beta$ in a sense that for every finite complex number $c_{1},\cdots,c_{n}$,  and  the same many $\varphi_{1},\cdots,\varphi_{n}$ in $\Delta(A)$ the following inequality
$$\mid\sum_{i=1}^{n}c_{i}\sigma(\varphi_{i})\mid\leq \beta\|\sum_{i=1}^{n}c_{i}\varphi_{i}\|_{A^{*}}$$
holds.\\

The set of all $\textup{BSE}$- functions is expressed by $C_{\textup{BSE}}(\Delta( A))$, where for 
 each $\sigma$, the $\textup{BSE}$- norm of $\sigma$, $\|\sigma\|_{\textup{BSE}}$ 
is the infimum of all  $\beta$s  applied in the above inequality.   According to [Lemma1, \cite{E6}], $(C_{\textup{BSE}}(\Delta(A)), \|.\|_{\textup{BSE}})$ is a semisimple Banach subalgebra of $C_{b}(\Delta(A))$. The algebra $A$ is named a $\textup{BSE}$- algebra  if it meets the following equality holds:
$$ C_{\textup{BSE}}(\Delta(A)) = \widehat{M(A)}.$$ 
If $A$ is unital, then $\widehat{M(A)}= \widehat{A}\mid_{\Delta(A)}$, indicating that $A$ is a $\textup{BSE}$- algebra if and only if  $C_{\textup{BSE}}(\Delta(A)) = \widehat{A}\mid_{\Delta(A)}$. In this paper, by $\widehat A$ we means $ \widehat{A}\mid_{\Delta(A)}$.
The semisimple Banach algebra $A$ is named a norm- $\textup{BSE}$ algebra if there exists some  $K>0$ in a sense that for each $a\in A$, the following holds: 
$$
\|a\|_{A}\leq K\|\hat{a}\|_{\textup{BSE}}.
$$
See\cite{DU}.

 The notion of $\textup{BSE}$- algebras was introduced and assessed by Takahasi and Hatori in \cite{E6}.
Later, they explained BSE properties; See  \cite{E7} and \cite{E8}.
 Next, several authors have studied this concept for various kinds of Banach algebras; See \cite{AK}, \cite{AK2}, \cite{SB}, \cite{N2}, \cite{K1}. 
For more details see \cite{DU}, \cite{DU2}. In 2007, Inoue and Takahasi in \cite{F1}, introduced the concept of $\textup{BED}$- concerning Doss, where Fourier-Stieltjes transforms of absolutely continuous measures are specified. 
The Bochner-Eberlein-Doss  ($\textup{BED}$) is derived from the famous
theorem proved in \cite{RD1} and \cite{RD2}. They proved that if $G$ is a locally compact Abelian
group, then the group algebra $L_{1}(G)$  is a $\textup{BED}$- algebra. Later, the researcher in  \cite{AP},  showed that $l^{p}(X, A)$ is a $\textup{BED}$- algebra if and only if $A$ is so.
The researchers in \cite{ARC},  studied ${\textup{BSE}}$- properties of  $L^{p}(G, A)$. Recently, the authors studied different types of ${\textup{BSE}}$- algebras of $C_{0}(X, A)$ in \cite{MAR}.
The
interested reader is referred to \cite{F1}, \cite{kan}, and \cite{klar}.
In general, 
\begin{align*}
\|\widehat a\|_{\infty} &\leq \|\widehat a\|_{\textup{BSE}} \leq \|\widehat a\|_{A^{**}}= \|a\|_{A}      
\end{align*}
for each $a\in A$.
The function $\sigma\in C_{BSE}(\Delta(A))$ is  a ${\textup{BED}}$- function, if for all $\epsilon>0$, there exists some compact set such $K\subseteq \Delta(A)$, where  for all $c_{i}\in\mathbb C$ and for all $\varphi_{i}\in\Delta(A)\backslash K$ the following inequality:
$$
|\sum_{i=1}^{n} c_{i}\sigma(\varphi_{i}) |\leq \epsilon\|\sum_{i=1}^{n}c_{i}\varphi_{i}\|_{A^{*}}
$$
holds. This definition of $\textup{BED}$-  functions is a modification of the definition in \cite{RD1}.
  The set of all $\textup{BED}$-  functions is expressed by the $C_{BSE}^{0}(\Delta(A))$.  Clearly, $C_{BSE}^{0}(\Delta(A))$ is a closed ideal of $ C_{BSE}(\Delta(A))$; see[\cite{F1}, Corollary 3.9].
In the following, we define several types of $\textup{BSE}$- algebras that are used in this paper; See\cite{MAR}.
\begin{DEF}
Let $A$ be a commutative  Banach algebra where $\Delta(A)$ is non-empty. Then:\\
(i) $A$ is a $\textup{BSE}$- algebra if and only if $ C_{\textup{BSE}}(\Delta(A)) = \widehat{M(A)}$.\\
(ii) $A$ is a weak- $\textup{BSE}$ algebra if and only if $C_{\textup{BSE}}(\Delta(A)) = \hat A$.\\
(iii) $A$ is a  $\textup{BED}$- algebra if and only if $C_{\textup{BSE}}^{0}(\Delta(A)) = \hat{A}$.\\
(iv)  $A$ is a weak- $\textup{BED}$- algebra if and only if $C_{\textup{BSE}}^{0}(\Delta(A)) = \widehat{M(A)}$.
\end{DEF}
In the following, we characterize the different types of   $\textup{BSE}$- algebras for commutative group algebra $A= L^{1}(G)$; See\cite{RD1}.
\begin{ex}
Let $G$ be a locally compact Abelian group and $A= L^{1}(G)$. Then \\
(i) $ C_{\textup{BSE}}(\Delta(A))= \widehat{M(G)}$, so $L^{1}(G)$ is a $\textup{BSE}$- algebra.\\
(ii)  $L^{1}(G)$ is a weak- $\textup{BSE}$  algebra if and only if  $G$ is discrete.\\
(iii) $ C_{\textup{BSE}}^{0}(\Delta(A))= {L^{1}(G\widehat)}$, so $L^{1}(G)$ is a $\textup{BED}$- algebra.\\
(iv) $L^{1}(G)$ is a weak- $\textup{BED}$ algebra if and only if  $G$ is discrete.
 \end{ex}
Let   $L^{1}(G, A)$ be the Banah algebra of all $A$- valued Borel- measurable functions $f: G\to A$ such that $\|f\|_{1, A}= \int_{G}\|f(x)\|_{A}dx<\infty$. Then
 $L^{1}(G, A)$ is
isometrically isomorphic with the projective tensor product $L^{1}(G)$ and $A$; See [\cite{kan}, Proposition 1.5.4]. Thus  $L^{1}(G, A)$  is a Banach algebra
under convolution product.   In \cite{AFR}, they showed that $L^{1}(G, A)$ is a group algebra if and only if $A$ is so. 
Due to  [\cite{kan}, Theorem 2.11.2], the character space of $L^{1}(G, A)$  is homeomorphic with $\hat{G}\times \Delta(A)$, such that for all $\chi\in\hat G$ and $\varphi\in\Delta(A)$,
$$
\chi\otimes\varphi: L^{1}(G)\hat\otimes A\to\mathbb C
$$
Where 
$$
\chi\otimes\varphi(f\otimes a) = \chi(f)\varphi(a)  \quad (f\in L^{1}(G), a\in A) 
$$
and if $f\in L^{1}(G, A)$, then
$$
\chi\otimes\varphi(f)= \varphi(\int_{G} \overline{\chi(x)}f(x)dx)= \int_{G} \overline{\chi(x)}\varphi(f(x))dx.
$$
In this paper, $\textup{BSE}$-  types and $\textup{BED}$- types  for  vector-valued group algebra $L^{1}(G, A)$ will be investigated. It will be shown that:

\begin{thm}
Let $A$ be a commutative semisimple unital separable dual Banach algebra and $G$ be a locally compact group. Then \\
(i) $L^{1}(G, A)$ is a ${\textup{BSE}}$-  algebra if and only if $A$ is a  ${\textup{BSE}}$ algebra.\\
(ii) $L^{1}(G, A)$  is a weak- ${\textup{BSE}}$  algebra if and only if $A$ is a  weak- ${\textup{BSE}}$ algebra and $G$ is discrete.\\
(iii)$L^{1}(G, A)$  is a ${\textup{BED}}$-  algebra if and only if $A$ is a  ${\textup{BED}}$-  algebra.\\
(iv) $L^{1}(G, A)$  is a weak- ${\textup{BED}}$  algebra if and only if $A$ is a  weak- ${\textup{BED}}$ algebra and $G$ is discrete.
\end{thm}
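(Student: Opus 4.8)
The plan is to reduce each of the four statements to the corresponding statement about $C_{\mathrm{BSE}}(\Delta(L^1(G,A)))$ and its zero-part, using the identification $\Delta(L^1(G,A))\cong \widehat G\times\Delta(A)$ together with the isometric isomorphism $L^1(G,A)\cong L^1(G)\widehat\otimes A$. First I would establish the key structural lemma that, under the stated hypotheses on $A$ (semisimple, unital, separable, dual Banach algebra), one has a natural identification
\begin{equation*}
C_{\mathrm{BSE}}(\widehat G\times\Delta(A))\;\cong\;C_{\mathrm{BSE}}(\widehat G)\,\widehat\otimes\,C_{\mathrm{BSE}}(\Delta(A))
\end{equation*}
as Banach algebras, with the $\mathrm{BSE}$-norm corresponding to the projective tensor norm, and similarly for the ideal $C^{0}_{\mathrm{BSE}}$. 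This is the technical heart: the dual of $L^1(G)\widehat\otimes A$ is $L^\infty(G)\overline\otimes A^{*}$, and the $\mathrm{BSE}$-inequality for a function $\sigma$ on $\widehat G\times\Delta(A)$ must be analysed against $\|\sum c_i(\chi_i\otimes\varphi_i)\|_{(L^1(G,A))^{*}}$; separability of $A$ and the dual-algebra hypothesis are what let me pass between the joint $\mathrm{BSE}$-condition and the two one-variable conditions (this mirrors the role these hypotheses play in the tensor-product results of \cite{ARC} and \cite{AP}).

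Once that lemma is in hand, I would combine it with $\widehat{M(L^1(G,A))}$. Since $A$ is unital, $L^1(G,A)\cong L^1(G)\widehat\otimes A$ has $M(L^1(G,A))\cong M(L^1(G))\widehat\otimes A\cong M(G)\widehat\otimes A = M(G,A)$, so $\widehat{M(L^1(G,A))}\cong\widehat{M(G)}\,\widehat\otimes\,\widehat A$ on $\widehat G\times\Delta(A)$; note here that unitality of $A$ also gives $\widehat{M(A)}=\widehat A$, so the distinction between $\mathrm{BSE}$ and weak-$\mathrm{BSE}$ for $A$ collapses only on the $A$-side but not on the $L^1(G)$-side, which is exactly why the group-discreteness hypothesis resurfaces in (ii) and (iv). For part (i): $L^1(G,A)$ is $\mathrm{BSE}$ iff $C_{\mathrm{BSE}}(\widehat G)\widehat\otimes C_{\mathrm{BSE}}(\Delta(A))=\widehat{M(G)}\widehat\otimes\widehat A$; since $L^1(G)$ is always a $\mathrm{BSE}$-algebra with $C_{\mathrm{BSE}}(\widehat G)=\widehat{M(G)}$ (Example~\ref{ex} (i)/(iii) region, i.e.\ the cited \cite{RD1} result), this forces and is forced by $C_{\mathrm{BSE}}(\Delta(A))=\widehat A=\widehat{M(A)}$, i.e.\ $A$ being $\mathrm{BSE}$. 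Part (iii) is identical with $C^{0}_{\mathrm{BSE}}$ replacing $C_{\mathrm{BSE}}$ and the $\mathrm{BED}$-property of $L^1(G)$ (again from the cited theorem) in place of its $\mathrm{BSE}$-property.

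For parts (ii) and (iv) the extra input is the dichotomy $C_{\mathrm{BSE}}(\widehat G)=\widehat{L^1(G)}$ iff $G$ is discrete, and likewise $C^{0}_{\mathrm{BSE}}(\widehat G)=\widehat{L^1(G)}$ iff $G$ is discrete (both recorded in the Example for $L^1(G)$). So $L^1(G,A)$ being weak-$\mathrm{BSE}$, i.e.\ $C_{\mathrm{BSE}}(\widehat G)\widehat\otimes C_{\mathrm{BSE}}(\Delta(A))=\widehat{L^1(G)}\widehat\otimes\widehat A$, splits (using the tensor lemma and injectivity of $\widehat\otimes$ on the relevant semisimple factors) into the conjunction $C_{\mathrm{BSE}}(\widehat G)=\widehat{L^1(G)}$ and $C_{\mathrm{BSE}}(\Delta(A))=\widehat A$, that is, $G$ discrete and $A$ weak-$\mathrm{BSE}$; part (iv) is the same argument with the zero-parts. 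In carrying out the "splitting" direction rigorously one must argue that equality of projective tensor products of the form $X_1\widehat\otimes Y=X_2\widehat\otimes Y$ (inside a fixed ambient algebra, with $Y$ fixed and nonzero) entails $X_1=X_2$; I would do this by evaluating against slice functionals $\chi\otimes(\cdot)$ for a fixed character $\chi$ of $Y$, which recovers the $\widehat G$-marginal, and symmetrically for the $\Delta(A)$-marginal.

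\textbf{Main obstacle.} The delicate point is the tensor-factorisation of $C_{\mathrm{BSE}}$ and $C^{0}_{\mathrm{BSE}}$: proving that a bounded continuous $\sigma$ on $\widehat G\times\Delta(A)$ satisfying the joint $\mathrm{BSE}$-inequality lies in the projective tensor product $C_{\mathrm{BSE}}(\widehat G)\widehat\otimes C_{\mathrm{BSE}}(\Delta(A))$ with control of norms — and conversely — is exactly where the hypotheses "separable" and "dual Banach algebra" do their work (they make $(L^1(G)\widehat\otimes A)^{*}=L^\infty(G,A^{*})$ behave well and let one disintegrate the test functionals $\sum c_i\chi_i\otimes\varphi_i$ along the two coordinates). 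I expect the forward inclusion (joint $\mathrm{BSE}\Rightarrow$ tensor) to be the technical crux; the reverse inclusion and the marginal-recovery step should be comparatively routine. For the $\mathrm{BED}$ versions one additionally checks that the "$\epsilon$ outside a compact set" condition tensorises, using that compact subsets of $\widehat G\times\Delta(A)$ are contained in products of compacts.
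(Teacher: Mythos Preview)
Your strategy is genuinely different from the paper's, and the difference matters because the step you flag as ``the technical crux'' --- the tensor factorisation
\[
C_{\mathrm{BSE}}(\widehat G\times\Delta(A))\;\cong\;C_{\mathrm{BSE}}(\widehat G)\,\widehat\otimes\,C_{\mathrm{BSE}}(\Delta(A)),
\qquad
C^{0}_{\mathrm{BSE}}(\widehat G\times\Delta(A))\;\cong\;C^{0}_{\mathrm{BSE}}(\widehat G)\,\widehat\otimes\,C^{0}_{\mathrm{BSE}}(\Delta(A)),
\]
is not established anywhere, and you give no concrete mechanism for the hard direction (joint $\mathrm{BSE}$ $\Rightarrow$ projective tensor representation). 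Saying that separability and the dual-algebra hypothesis ``let one disintegrate the test functionals'' is not an argument; a general $\sigma\in C_{\mathrm{BSE}}(\widehat G\times\Delta(A))$ need not be a norm-limit of finite sums of elementary tensors of $\mathrm{BSE}$-functions without further work, and the projective norm on the right is not obviously comparable to the $\mathrm{BSE}$-norm on the left. The paper deliberately avoids this issue. There is also a slip in your treatment of (iv): $C^{0}_{\mathrm{BSE}}(\widehat G)=\widehat{L^{1}(G)}$ holds for \emph{every} locally compact abelian $G$ (this is precisely the $\mathrm{BED}$-property of $L^{1}(G)$), so the dichotomy you want there is $C^{0}_{\mathrm{BSE}}(\widehat G)=\widehat{M(G)}$ iff $G$ is discrete.

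What the paper actually does is more hands-on and never needs the tensor factorisation of $C_{\mathrm{BSE}}$. Part (i) is imported from \cite{AR}. For (iii), one first uses (i) to place any $\sigma\in C^{0}_{\mathrm{BSE}}(\Delta(L^{1}(G,A)))$ inside $C_{\mathrm{BSE}}(\Delta(L^{1}(G,A)))=\widehat{M(G,A)}$, so $\sigma=\widehat\mu$ with $\mu\in M(G,A)=M(G)\widehat\otimes A$; the key step (Lemma~\ref{l2.2}) is then a direct estimate showing that the $C^{0}_{\mathrm{BSE}}$-condition on $\widehat\mu$ forces each $M(G)$-component of $\mu$ to lie in $L^{1}(G)$, whence $\mu\in L^{1}(G,A)$. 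The reverse inclusion $\widehat{L^{1}(G,A)}\subseteq C^{0}_{\mathrm{BSE}}$ comes from the elementary Lemma~\ref{lfc0}(ii) applied termwise and passage to the closure. For (ii) and (iv) the pivot is not a tensor splitting but the concrete fact (Lemma~\ref{gdis}) that $M(G,A)=L^{1}(G,A)$ iff $G$ is discrete: once the weak-$\mathrm{BSE}$ or weak-$\mathrm{BED}$ hypothesis on $L^{1}(G,A)$ forces $M(G,A)=L^{1}(G,A)$, discreteness of $G$ follows, and then the unital hypothesis on $A$ collapses the remaining statement to (i) or (iii). Your slice-functional idea for recovering marginals is close in spirit to how the paper extracts the $\widehat G$- and $\Delta(A)$-information (via the inequalities in Lemma~\ref{rel1}), but the paper applies it at the level of individual $\mathrm{BSE}$/$\mathrm{BED}$ estimates rather than at the level of an abstract tensor identity.
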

Furthermore, we show that:
$$
M(G, A)= L^{1}(G, A)
$$
if and only if $G$  is discrete.

\section{Vector- valued measure algebras on groups}
Let $G$ be a locally compact group and $B$ be a Banach space. In this section, we will briefly introduce vector-valued measure algebra and the convolution in it.
 A function $\mu: \mathcal{B}(G)\to B$ is named a vector measure if it is $\sigma$- additive measure, where $\mathcal{B}(G)$ is the  $\sigma$- algebra of all Borel sets in $G$.

   The vector measure $\mu$ is named regular if for each $E\in \mathcal{B}(G)$ and $\epsilon >0$, there exists a compact set $K\subseteq G$  and an open set $U\subseteq G$ such that $K\subseteq E\subseteq U$ and $\|\mu(F)\|_{B}< \epsilon$,
for all Borel set $F\subseteq U\backslash K$. We define the total vacation of $\mu$, by $\|\mu\|= |\mu|(X)$, where
$$
|\mu|(E):= \textup{sup}\{\|\sum_{i=1}^{n}\mu(E_{i})\|: ~ \{E_{i}\} is ~ a ~ partion ~ of ~ E\}
$$
for all $E\in \mathcal{B}(G)$, which is a vector measure. The set of all bounded regular vector measures $\mu: \mathcal{B}(G)\to B$ will be denoted by $M(G, B)$. Let $\mu\in M(G, B)$ and $g\in B^{*}$. Then $go\mu:  \mathcal{B}(G)\to  \mathbb C$ is a regular measure and 
$$
B^{*}o M(G)= \{go\mu: g\in B^{*} ~ and ~ \mu\in M(G, B)\} = M(G)
$$ 
\begin{thm}
Let $B$ be a  Banach space and $X$ be a locally compact Hausdorff space. Then
$$
B^{*}o M(X, B)= M(X).
$$
\end{thm}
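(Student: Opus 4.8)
The goal is to show $B^{*} \circ M(X,B) = M(X)$, where the left side denotes $\{g \circ \mu : g \in B^{*},\ \mu \in M(X,B)\}$. The plan is to prove the two inclusions separately; the forward inclusion is essentially immediate, and the reverse inclusion is where the real content lies.

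\textbf{The inclusion $B^{*} \circ M(X,B) \subseteq M(X)$.} First I would fix $g \in B^{*}$ and $\mu \in M(X,B)$ and check that $g \circ \mu \colon \mathcal{B}(X) \to \mathbb{C}$ is a bounded regular complex Borel measure. Countable additivity of $g \circ \mu$ follows from continuity and linearity of $g$ together with $\sigma$-additivity of $\mu$ (applied to the partial sums, which converge in $B$). For the norm bound, for any partition $\{E_i\}$ of $X$ one has $\sum_i |g(\mu(E_i))| \le \|g\| \sum_i \|\mu(E_i)\|$ (after absorbing unimodular scalars into the sum), so $\|g \circ \mu\| \le \|g\|\,\|\mu\| < \infty$. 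Regularity of $g \circ \mu$ is inherited directly from the definition of regularity of $\mu$: given $E$ and $\epsilon$, choose $K \subseteq E \subseteq U$ with $\|\mu(F)\|_B < \epsilon/\|g\|$ for all Borel $F \subseteq U \setminus K$, whence $|g(\mu(F))| < \epsilon$. This gives $B^{*} \circ M(X,B) \subseteq M(X)$.

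\textbf{The inclusion $M(X) \subseteq B^{*} \circ M(X,B)$.} This is the substantive direction: given a scalar measure $\nu \in M(X)$ I must produce $g \in B^{*}$ and a vector measure $\mu \in M(X,B)$ with $g \circ \mu = \nu$. The natural move is to pick any norm-one $b_0 \in B$ and any $g \in B^{*}$ with $g(b_0) = 1$ (Hahn--Banach), and set $\mu(E) := \nu(E)\, b_0$ for $E \in \mathcal{B}(X)$. Then $\mu$ is $\sigma$-additive because $\nu$ is and scalar multiplication by a fixed vector is continuous and linear; its total variation is $\|\mu\| = \|\nu\|\,\|b_0\| = \|\nu\| < \infty$; and $\mu$ is regular since for Borel $F \subseteq U \setminus K$ we get $\|\mu(F)\|_B = |\nu(F)|\,\|b_0\|$, which is small by regularity of $\nu$. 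Finally $g(\mu(E)) = \nu(E)\,g(b_0) = \nu(E)$, so $\nu = g \circ \mu \in B^{*} \circ M(X,B)$.

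\textbf{Main obstacle.} There is no deep obstacle here — the argument is a routine packaging of Hahn--Banach together with the elementary stability of $\sigma$-additivity, boundedness, and regularity under composition with a bounded linear functional and under multiplication by a fixed vector. The only points requiring a little care are: (a) verifying that the total variation $|g \circ \mu|$ is genuinely controlled by $\|g\|\,|\mu|$ rather than just finite, which matters if one wants the norm identities and not merely the set-theoretic equality; and (b) making sure the construction $\mu = \nu(\cdot)b_0$ indeed lands in the class of \emph{regular} vector measures as defined in the paper, which it does since the $B$-norm estimate reduces verbatim to the scalar estimate for $\nu$. I would also remark that the same proof shows the stronger statement that every $\nu \in M(X)$ arises as $g \circ \mu$ with $\|\mu\| = \|\nu\|$ and $\|g\| = 1$, which is what will be needed when this theorem is applied to identify $M(G,A)$ as a convolution measure algebra.
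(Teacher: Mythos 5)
Your proposal is correct and follows essentially the same route as the paper's proof: the forward inclusion by checking $\sigma$-additivity, regularity, and the bound $\|g\circ\mu\|\le\|g\|\,\|\mu\|$, and the reverse inclusion by choosing $g\in B^{*}$ with $g(b_0)=1$ and setting $\mu(E)=\nu(E)b_0$. Your version is in fact slightly more careful than the paper's, since you explicitly verify that $\nu(\cdot)b_0$ is $\sigma$-additive, bounded, and regular, and you note the norm identities $\|\mu\|=\|\nu\|$, $\|g\|=1$.
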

\begin{proof}
If $g\in B^{*}$ and $\mu: B(X)\to B$ is a vector- measure, then for all sequence $(B_{n})$ of all pairwise disjoint Borel sets we have 
$$
\mu(\cup_{n=1}^{\infty} B_{n})= \sum_{n=1}^{\infty} \mu(B_{n})
$$
Let $S_{n}=  \sum_{k=1}^{n} \mu(B_{k})$ is in $A$. Since the series is uniformly convergent. So 
$$S_{n}\to S:= \mu(\cup_{n=1}^{\infty} B_{n}).$$
 Since $g$ is continuous, the following is the yield
$$
g(S_{n})= \sum_{k=1}^{n} go\mu(B_{k})\to g(S)
$$
Thus $go\mu$ is $\sigma$- additive. Moreover $\mu$ is a regular measure, thus for all $\epsilon> 0$, Borel set $E$ in $X$ there exist some compact set $K$ and open set $U$ such that $K\subseteq E\subseteq U$ where
$$
\|\mu(F)\|<\frac{\epsilon}{1+\|g\|}
$$
for all Borel set $F\subseteq U\backslash K$. Then
$$
|go\mu (f)|\leq \|g\|.\|\mu(F)\|< \epsilon
$$
so $go\mu$ is a regular measure, morevore $(B_{n})$ is a finite pairwise disjoint sequence of $\overline X$, then 
\begin{align*}
\|go \mu\| &:= |go\mu|(X)\\
                & = sup\{|\sum_{n=1}^{m}go\mu(B_{n})| : ~ (B_{n}) ~ is ~ pairwise~ disjoint\}\\
                 &\leq \|g\| sup\{\|\sum_{n=1}^{m}\mu(B_{n})\|  : ~ (B_{n}) ~ is ~ pairwise~ disjoint\}\\
                  &:= \|g\|.\|\mu\|
\end{align*}
Thus $go\mu$ is a bounded vector- measure and $\|go\mu\|\leq \|g\|.\|\mu\|$.

  Conversely, if $\nu\in M(X)$, then there exist $\mu\in M(X, B)$ and $g\in B^{*}$ such that $go\mu= \nu$. Because if $a\in B$ and $g\in B^{*}$ where $g(a)= 1$, define $\mu(E):= \nu(E)a$, then $\mu$
is a bounded regular measure in $M( X, B)$  such that
\begin{align*}
go\mu(E)= g(\nu(E)a)= \nu(E)g(a) = \nu(E)
\end{align*}
for all  Borel set $E$ in $X$. Therefore 
$$\nu \in B^{*}oM(X, B).$$
\end{proof}
\begin{cor}
Let $A$ be a commutative semisimple Banach algebra and $G$ be a locally compact group. Then
$$
\Delta(A)o M(G, A)= M(G).
$$
\end{cor}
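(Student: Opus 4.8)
The plan is to deduce the corollary directly from Theorem 2.2, applied with $B = A$ and $X = G$, which already yields $A^{*}\circ M(G,A) = M(G)$; the only additional point is to replace the full dual $A^{*}$ by the character space $\Delta(A)$. First I would record two elementary facts: since $A$ is semisimple and (implicitly) nonzero, its Jacobson radical $\bigcap_{\varphi\in\Delta(A)}\ker\varphi$ equals $\{0\}$, so $\Delta(A)\neq\emptyset$; and every character is a bounded linear functional, so $\Delta(A)\subseteq A^{*}$.

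For the inclusion $\Delta(A)\circ M(G,A)\subseteq M(G)$, I would simply invoke the first half of the proof of Theorem 2.2: for any $\varphi\in\Delta(A)$, viewed as an element of $A^{*}$, and any $\mu\in M(G,A)$, the composition $\varphi\circ\mu$ is $\sigma$-additive by continuity of $\varphi$, regular because $\mu$ is, and bounded with $\|\varphi\circ\mu\|\le\|\varphi\|\,\|\mu\|$. Hence $\varphi\circ\mu\in M(G)$, and no new argument is needed.

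For the reverse inclusion, let $\nu\in M(G)$. Fix any $\varphi\in\Delta(A)$; since $\varphi\neq 0$ there is $b\in A$ with $\varphi(b)\neq 0$, and setting $a:=b/\varphi(b)$ gives an element of $A$ with $\varphi(a)=1$. Define $\mu\colon\mathcal{B}(G)\to A$ by $\mu(E):=\nu(E)\,a$. Since $\lambda\mapsto\lambda a$ is a bounded linear operator $\mathbb{C}\to A$, the measure $\mu$ inherits $\sigma$-additivity, regularity, and boundedness (with $\|\mu\|\le\|a\|\,\|\nu\|$) from $\nu$, so $\mu\in M(G,A)$. Finally $\varphi\circ\mu(E)=\varphi(\nu(E)a)=\nu(E)\varphi(a)=\nu(E)$ for every Borel set $E$, whence $\nu=\varphi\circ\mu\in\Delta(A)\circ M(G,A)$.

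There is essentially no substantive obstacle once Theorem 2.2 is available; the only points needing a word of care are that $\Delta(A)$ is nonempty, which is exactly where semisimplicity (together with $A\neq\{0\}$) enters, and the verification that $E\mapsto\nu(E)a$ is a bounded regular vector measure, which is immediate because composing a scalar measure with the fixed bounded operator $\lambda\mapsto\lambda a$ preserves all three properties.
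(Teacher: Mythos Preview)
Your proof is correct and follows essentially the same route as the paper: one inclusion comes from $\Delta(A)\subseteq A^{*}$ together with Theorem~2.1, and the reverse inclusion is obtained by picking $\varphi\in\Delta(A)$ and $a\in A$ with $\varphi(a)=1$ and setting $\mu(E)=\nu(E)a$. You add the useful remark that semisimplicity guarantees $\Delta(A)\neq\emptyset$, which the paper leaves implicit.
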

\begin{proof}
$$
\Delta(A)o M(G,A)\subseteq A^{*}oM(G, A)\subseteq M(G).
$$
Conversely, if $\nu\in M(G)$, $a\in A$ and $\varphi\in\Delta(A)$ where $\varphi(a)= 1$, then $\nu= \varphi o\mu$, such that $\mu(E)= \nu(E)a$, for $E\in B(G)$. Thus 
$$M(G)\subseteq \Delta(A)oM(G,A).$$
Therefore 
$$
\Delta(A)o M(G, A)= M(G).
$$
\end{proof}

Let $\Phi: C_{0}(G)\to B$ be a relatively compact linear operator. Then there exists a unique bounded regular vector measure $\mu\in M(G, B)$such that 
$$
\Phi(f)= \int_{G}fd\mu(x)
$$
for all $f\in C_{0}(G)$. This means that 
$$
go\Phi(f)= \int_{G}f(x)dgo\mu(x)
$$
for all $g\in B^{*}$
where $\|\Phi\|= |\mu|(G)$.

  Furthermore, for each vector measure $\mu: \mathcal{B}(G)\to B$  the linear operator defined  by
$$
\Phi_{\mu}(f)= \int_{B}fd\mu
$$
for  $f\in C_{0}(G)$, is a relatively weakly compact bounded operator in $W(C_{0}(G), B)$. Therefore
$$
M(G, B)= W(C_{0}(G), B)
$$
 as two Banach spaces such that the maps $\mu\mapsto \Phi_{\mu}$ is isometric. Hence $(M(G, B), \|.\|)$ is a Banach space; see [\cite{kos}, Lemma2].

  Let $C_{0}(G)\hat{\otimes} B$, [resp $C_{0}(G)\check{\otimes} B$] be projective [resp, injective] Banach spaces. Then
$$
(C_{0}(G)\hat{\otimes} B)^{*}= M(G, B^{*})
$$
as two Banach spaces such that the map $\mu\mapsto \Phi_{\mu}$ defined by 
$$
\Phi_{\mu}(f\otimes z)= \int_{G}\int_{{\mathcal{B}}_{1}(B^{*})}f(x)g(z)d\mu^{'}(x, g)
$$
is isometric, for each $f\in C_{0}(G)$ and $z\in B$, where $\mu^{'}: \mathcal{B}(G\times {\mathcal{B}}_{1}(B^{*}))\to \mathbb C$
is the corresponding regular vector measure on $G\times {\mathcal{B}}_{1}(B^{*})$, where 
$$
{\mathcal{B}}_{1}(B^{*}):= \{ g\in B^{*}: \|g\|\leq 1\}.
$$
to the linear functional $\Phi_{\mu}$.
It is to be noted that  for  Banach spaces $X$ and $Y$  the following is the yield:
$$
(X\check \otimes Y)^{*}= M({\mathcal{B}}_{1}(X^{*})\times {\mathcal{B}}_{1}(Y^{*}))
$$
as two Banach spaces, where the map $\mu\mapsto F_{\mu}$ is isometric, where 
$$
F_{\mu}(x\otimes y)= \int_{{\mathcal{B}}_{1}(X^{*})}\int_{{\mathcal{B}}_{1}(Y^{*})}f(x)g(y)d\mu(f,g);
$$
See[\cite{JD}, Theorem1.11].
In particular, let $A= E^{*}$ be a dual Banach algebra. Then by using Bochner's theorem on semigroup one can show that:
\begin{align*}
(C_{0}(G\check\otimes E))^{*} &= {C_{0}(G, E)}^{*}\\
                                                  &= M({\mathcal{B}}_{1}(M(G))\times {\mathcal{B}}_{1}(A))\\
                                                  &= M(G\times A_{1})
\end{align*}
where $A_{1}$ is unit ball of $A$
as Banach algebra such that the map $\mu\mapsto \Phi_{\mu}$ defined by 
$$
\Phi_{\mu}(f\otimes e)=\int_{G}\int_{A_{1}}f(x)a(e)d\mu(x, a)
$$
from $ M(G\times A_{1})$ onto ${C_{0}(G, E)}^{*}$
is isometric, for each $f\in C_{0}(G)$ and $e\in E$; See [\cite{PR}, p.316] . Let $\mu,\nu\in M(G\times A_{1})$ and $B\in \mathcal{B}(G)$. Then:
$$
\mu\star\nu(B)(e)= \int_{G\times A_{1}}\int_{G\times A_{1}}\chi_{B}(xy)ab(e)d\mu(x, a)d\nu(y, b)
$$
 is the vector measures corresponding to the linear functional 
$$
\Phi_{\mu}\star \Phi_{\nu}(f\otimes e)= \int_{G\times A_{1}}\int_{G\times A_{1}} f(xy)ab(e)d\mu(x, a)d\nu(y, b)
$$
 in $(C_{0}(G, E))^{*}$. Therefore $(M(G, A), \star)$ with predual $C_{0}(G, E)$ is a Banach algebra.

  The Banach space $X$ is a Pillips space, if for every compact Hausdorff space $Y$, every Radon measure $\mu: \mathcal{B}(Y)\to \mathbb C$ and bounded linear operator  $T: L^{1}(Y,\mu)\to X$, there exists 
a strongly measurable function $f: Y\to X$ where
$$
T(g) = \int_{Y}g(y)f(y)d\mu(y)
$$
for all $g\in L^{1}(Y, \mu)$ and 
$$
es sup\{\|f(y)\|: y\in Y\}= \|T\|
$$
see[\cite{JL}, p.104]. It is to be noted that each reflexive Banach space and each dual Banach space which is separable is Philips space.
   Let $X$ be a Philips space. Then 
$$
M(G, X)= M(G)\hat \otimes X
$$
isometrically.

  In general, 
$$
(X\hat\otimes Y)^{*}= X^{*}\check\otimes Y^{*}
$$
where $X^{*}$ or $Y^{*}$ is  a Philips space; see[\cite{JL}, 5.3]. Furthermore, $M(G)\hat\otimes X$ can be embedded in $M(G, X)$ isometrically. The equality holds if $X$ has Radon Nikodim property.
In particular, $M(G, A) = M(G)\hat \otimes A$ isometrically isomorphism as Banach algebras, where $A$ is a separable dual Banach algebra; see  \cite{kan}.
It is to be noted that the Banach algebra  $A$ has Radon Nikodim property if and only if for each bounded set  $E\subseteq A$ and $\epsilon> 0$, there exists $b\in E$ such that $b\notin\overline{<E\backslash B(b, r)>}$; See [\cite{D}, p. 819].
\begin{thm}
Let $A$ be a  Banach algebra and $G$ be a locally compact group. Then 
$$
A^{*}o L^{1}(G, A)= L^{1}(G).
$$
\end{thm}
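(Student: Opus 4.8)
The plan is to reproduce, for Bochner-integrable functions, the same two-sided argument that was used in the proof of Theorem~2.1.

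First I would dispose of the easy inclusion $A^{*}o L^{1}(G,A)\subseteq L^{1}(G)$. Let $g\in A^{*}$ and $f\in L^{1}(G,A)$. Since $g:A\to\mathbb C$ is linear and continuous, hence Borel measurable, the scalar function $go f:G\to\mathbb C$, $x\mapsto g(f(x))$, is Borel measurable; and the pointwise estimate $|g(f(x))|\le\|g\|\,\|f(x)\|_{A}$ together with $\int_{G}\|f(x)\|_{A}\,dx<\infty$ yields
$$
\|go f\|_{1}=\int_{G}|g(f(x))|\,dx\le\|g\|\,\|f\|_{1,A}<\infty .
$$
Hence $go f\in L^{1}(G)$, and in fact $\|go f\|_{1}\le\|g\|\,\|f\|_{1,A}$.

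For the reverse inclusion I would argue exactly as in the proof of Theorem~2.1: fix $a\in A$ with $a\neq 0$ and, by the Hahn--Banach theorem, choose $g\in A^{*}$ with $g(a)=1$. Given $h\in L^{1}(G)$, define $f:G\to A$ by $f(x)=h(x)\,a$. Then $f$ is Borel measurable (it is $h$ composed with the continuous map $\lambda\mapsto\lambda a$), $\|f(x)\|_{A}=|h(x)|\,\|a\|_{A}$, so that $\|f\|_{1,A}=\|a\|_{A}\,\|h\|_{1}<\infty$ and $f\in L^{1}(G,A)$; moreover
$$
go f(x)=g\bigl(h(x)\,a\bigr)=h(x)\,g(a)=h(x)\qquad (x\in G),
$$
so $h=go f\in A^{*}o L^{1}(G,A)$. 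Combining the two inclusions gives $A^{*}o L^{1}(G,A)=L^{1}(G)$.

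I do not expect a genuine obstacle here: the only hypothesis actually used is $A\neq\{0\}$, which is needed merely to pick $a$ and $g$, and granting that, the computation is a verbatim scalarization of the measure-algebra identity of Theorem~2.1. If one preferred, the reverse inclusion could instead be phrased through the isometric identification $L^{1}(G,A)\cong L^{1}(G)\hat\otimes A$ by taking the elementary tensor $h\otimes a$ and applying $g$ in the second coordinate, but the function-level argument above is the shortest route.
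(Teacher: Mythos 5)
Your proposal is correct and follows essentially the same route as the paper: the easy inclusion via the estimate $|g(f(x))|\le\|g\|\,\|f(x)\|_{A}$, and the reverse inclusion by choosing $a\neq 0$, $g\in A^{*}$ with $g(a)=1$, and scalarizing $h\otimes a$. Your added remarks on measurability and the implicit hypothesis $A\neq\{0\}$ only make explicit what the paper leaves tacit.
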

\begin{proof}
If $f\in L^{1}(G)$ and $a\in A$ is nonzero, then there exists $g\in A^{*}$ where $g(a)= 1$. Assume that $h= f\otimes a$, so $h\in L^{1}(G, A)$ and $\|h\|_{1, A}= \|f\|_{1}. \|a\|_{A}$
such that 
$$
goh(x)= g(f(x)a)= f(x)g(a)= f(x) \quad (x\in G)
$$ 
Hence $f=goh$.
At a result $L^{1}(G)\subseteq A^{*}oL^{1}(G, A)$.

  Conversely,  if $g\in A^{*}$ and $h\in L^{1}(G, A)$, then 
$$
|goh(x)|\leq\|g\|.\|h(x)\|_{A}
$$
and  so
$$
\|goh\|_{1}\leq \|g\|.\|h\|_{1, A}.
$$
Then 
$$A^{*}o L^{1}(G, A)\subseteq L^{1}(G).$$
  Therefore 
$$
A^{*}o L^{1}(G, A)= L^{1}(G).
$$
\end{proof}
\begin{cor}
Let $A$ be a semisimple commutative Banach algebra and $G$ be a locally compact group. Then 
$$
\Delta(A)o L^{1}(G, A)= L^{1}(G).
$$
\end{cor}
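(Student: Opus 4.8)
The plan is to obtain this as an immediate consequence of the preceding theorem, $A^{*}\circ L^{1}(G,A)=L^{1}(G)$, together with the inclusion $\Delta(A)\subseteq A^{*}$, exactly mirroring the way the corresponding statement for measure algebras, $\Delta(A)\circ M(G,A)=M(G)$, was deduced from the general identity $B^{*}\circ M(X,B)=M(X)$.

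First I would record the easy inclusion: every $\varphi\in\Delta(A)$ is a bounded linear functional on $A$, hence
$$
\Delta(A)\circ L^{1}(G,A)\subseteq A^{*}\circ L^{1}(G,A)=L^{1}(G),
$$
the last equality being the preceding theorem. For the reverse inclusion I would first note that semisimplicity of $A$ (in the nontrivial case $A\neq\{0\}$) forces $\Delta(A)\neq\emptyset$, since for a commutative algebra the Jacobson radical equals $\bigcap_{\varphi\in\Delta(A)}\ker\varphi$, which would be all of $A$ were $\Delta(A)$ empty. Fix any $\varphi\in\Delta(A)$; as $\varphi\neq 0$ there is $a\in A$ with $\varphi(a)=1$ after rescaling. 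Given $f\in L^{1}(G)$, put $h:=f\otimes a$, so that $h\in L^{1}(G,A)$ with $\|h\|_{1,A}=\|f\|_{1}\,\|a\|_{A}$, and for $x\in G$
$$
\varphi\circ h(x)=\varphi\bigl(f(x)\,a\bigr)=f(x)\,\varphi(a)=f(x);
$$
measurability and integrability of $\varphi\circ h$ are automatic from $|\varphi\circ h(x)|\le\|\varphi\|\,\|h(x)\|_{A}$. Hence $f=\varphi\circ h\in\Delta(A)\circ L^{1}(G,A)$, which gives $L^{1}(G)\subseteq\Delta(A)\circ L^{1}(G,A)$, and the two inclusions together yield the asserted equality.

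I do not anticipate a genuine obstacle here: the argument is the same tensor-factorization device already used in the measure-algebra corollary and in the proof of the preceding theorem, and the only point needing a word of care is the existence of a character on $A$, which is exactly what semisimplicity supplies.
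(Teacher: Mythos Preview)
Your argument is correct and is exactly the intended one: the paper states this corollary without proof, relying on the reader to transcribe the proof of the earlier corollary $\Delta(A)\circ M(G,A)=M(G)$ with $L^{1}$ in place of $M$, which is precisely what you do. The only addition you make---justifying $\Delta(A)\neq\emptyset$ via semisimplicity---fills a small gap the paper leaves implicit.
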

\begin{lem}\label{gdis}
If $A$ is  unital dual Banach  algrbra, then 
$$
M(G, A)= L^{1}(G, A)
$$
if and only if  $G$ is discrete.
\end{lem}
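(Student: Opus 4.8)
The plan is to reduce the lemma to the classical fact that $M(G)=L^{1}(G)$ precisely when $G$ is discrete, using the composition-with-functionals results of this section for one implication and an explicit description of $M(G,A)$ over a discrete group for the other. Throughout I use that $L^{1}(G,A)$ embeds isometrically into $M(G,A)$ by $f\mapsto\mu_{f}$, $\mu_{f}(E)=\int_{E}f(x)\,dx$, so that the asserted equality $M(G,A)=L^{1}(G,A)$ says exactly that this embedding is onto.

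For the ``only if'' direction I would assume $M(G,A)=L^{1}(G,A)$ and compose with functionals. Since $g\circ\mu_{f}$ has density $g\circ f$, the hypothesis gives $A^{*}o M(G,A)=A^{*}o L^{1}(G,A)$ as sets of scalar measures on $G$. By the theorem $A^{*}o M(X,B)=M(X)$ (applied with $X=G$, $B=A$) the left side is $M(G)$, and by the theorem $A^{*}o L^{1}(G,A)=L^{1}(G)$ the right side is $L^{1}(G)$; hence $M(G)=L^{1}(G)$, which by the classical theorem forces $G$ to be discrete. Concretely, if $G$ were non-discrete then $\{e\}$ would be Haar-null, so for a nonzero $a\in A$ and $g\in A^{*}$ with $g(a)\ne 0$ the measure $g(a)\delta_{e}=g\circ(\delta_{e}\otimes a)$ would have an $L^{1}(G)$-density, which is absurd; thus $\delta_{e}\otimes a\in M(G,A)\setminus L^{1}(G,A)$.

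For the ``if'' direction, suppose $G$ is discrete and normalize Haar measure to counting measure, so that $L^{1}(G,A)=\ell^{1}(G,A)=\{(a_{x})_{x\in G}:\sum_{x}\|a_{x}\|_{A}<\infty\}$. The core step is to show that $\mu\mapsto(\mu(\{x\}))_{x\in G}$ is an isometric bijection of $M(G,A)$ onto $\ell^{1}(G,A)$: for $\mu\in M(G,A)$ only countably many singletons carry nonzero mass (any finite partial sum of $\|\mu(\{x\})\|$ is at most $|\mu|(G)<\infty$), $\sigma$-additivity yields $\mu(E)=\sum_{x\in E}\mu(\{x\})$ for every Borel set $E$, and refining an arbitrary finite partition into singletons shows $|\mu|(G)=\sum_{x}\|\mu(\{x\})\|$; conversely $\mu(E):=\sum_{x\in E}a_{x}$ defines, for any $\ell^{1}$-summable family, a $\sigma$-additive vector measure whose regularity is automatic on a discrete space (use finite sets as approximating compacta). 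Because the composite $L^{1}(G,A)\hookrightarrow M(G,A)\to\ell^{1}(G,A)$ is the identity, the embedding is surjective, i.e.\ $M(G,A)=L^{1}(G,A)$.

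No deep tool is required; the one slightly delicate point is the isometric identification $M(G,A)\cong\ell^{1}(G,A)$ in the discrete case, where one must check that $\sigma$-additivity still determines $\mu$ on possibly uncountable Borel sets, that the total variation equals the $\ell^{1}$-norm of the family of point masses, and that a summable family genuinely gives a \emph{regular} vector measure. Note that neither implication really uses that $A$ is dual or unital beyond the harmless $A\ne\{0\}$; the dual-algebra hypothesis only ensures that $M(G,A)$ is a Banach algebra, so that the conclusion can be read as an identity of Banach algebras.
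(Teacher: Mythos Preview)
Your argument is correct, and both directions reduce (as in the paper) to the scalar fact $M(G)=L^{1}(G)\Leftrightarrow G$ discrete, but the mechanisms differ. For the ``only if'' direction the paper does not invoke Theorems~2.1 and~2.3; instead it uses the unit and the predual explicitly: given $\mu\in M(G)$ one has $\mu\otimes 1_{A}\in M(G,A)=L^{1}(G,A)=L^{1}(G)\hat\otimes A$, writes $\mu\otimes 1_{A}=\sum_{n}\mu_{n}\otimes a_{n}$ with $\mu_{n}\in L^{1}(G)$, and then tests against $f\otimes e\in C_{0}(G)\otimes E$ (where $A=E^{*}$) to obtain $\mu=\sum_{n}\dfrac{a_{n}(e)}{1_{A}(e)}\,\mu_{n}\in L^{1}(G)$. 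Your route via $A^{*}\circ M(G,A)=M(G)$ and $A^{*}\circ L^{1}(G,A)=L^{1}(G)$ is cleaner, uses results the section has just established, and---as you observe---does not need $A$ unital or dual. For the ``if'' direction the paper simply strings together tensor identifications,
\[
L^{1}(G,A)=\ell^{1}(G,A)=\ell^{1}(G)\hat\otimes A=M(G)\hat\otimes A=M(G,A),
\]
the last equality resting on the Radon--Nikod\'ym/separable-dual hypothesis discussed earlier in the section; your direct description of $M(G,A)$ over a discrete $G$ as $\ell^{1}(G,A)$ via point masses is more elementary and avoids that hypothesis entirely. In short: same skeleton, different flesh---your version trades the tensor-product machinery for the section's functional-composition lemmas and a hands-on identification, yielding a proof valid for any nonzero Banach space $A$.
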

\begin{proof}
Assume that $M(G, A)= L^{1}(G, A)$.
If $\mu\in M(G)$, then $\mu\otimes 1_{A}\in L^{1}(G, A)$. Then
$$
\mu\otimes 1_{A}= \sum_{n=1}^{\infty}\mu_{n}\otimes a_{n}
$$
where $\mu_{n}\in L^{1}(G)$, $a_{n}\in A= E^{*}$ and $\sum_{n=1}^{\infty}\|\mu_{n}\|.\|a_{n}\|<\infty$. In another hand 
$$M(G, A)= C_{0}(G, E)^{*}$$
 so
\begin{align*}
\mu\otimes 1_{A}(f\otimes e)= \mu(f)1_{A}(e)= \sum_{n=1}^{\infty} \mu_{n}(f)a_{n}(e)
\end{align*}
for all $f\in C_{0}(G)$ and fixed $e\in E$. At a result 
\begin{align*}
\mu(f) &= \sum_{n=1}^{\infty}\frac{a_{n}(e)}{1_{A}(e)}\mu_{n}(f)  
\end{align*}
Put 
$$\mu:=  \sum_{n=1}^{\infty}c_{n}\mu_{n}\in L^{1}(G)$$
 where $c_{n}= \frac{a_{n}(e)}{1_{A}(e)}$. This implies that $L^{1}(G)= M(G)$ and then $G$ is discrete.

  If $G$ is discrete, then
\begin{align*}
 L^{1}(G, A) &= l_{1}(G, A)\\
                     &= l_{1}(G)\hat\otimes A\\
                       &= M(G) \hat\otimes A = M(G, A)
\end{align*}
\end{proof}

\section{\textup{BSE}- properties  of $L^{1}(G, A)$}
Let  $X$ be a non-empty set,  $A$ be a commutative Banach algebra, and $1\leq p< \infty$. The space  of all functions such $f: X\to A$ which 
$$
\sum_{x\in X}\|f(x)\|^{p}<\infty
$$
 is expressed by $l^{p}(X, A)$, with pointwise product and the norm
$$
\|f\|_{p}= (\sum_{x\in X}\|f(x)\|^{p})^{\frac{1}{p}}
$$
is a commutative Banach algebra.The
researcher in \cite{AP}, revealed that  $l^{p}(X, A)$ is a $\textup{BED}$- algebra if and only if $A$ is so. They proved that  $l^{p}(X, A)$ is a weak- $\textup{BSE}$ algebra if and only if $A$ is so.
In \cite{MAR}, the authers studied $\textup{BED}$ and $\textup{BSE}$- properties of vector- valued $C^{*}$- algebras $C_{0}(X, A)$.
The researchers in \cite{AR} proved that if $A$ is an unital BSE Banach algebra and $G$ is an Abelian locally compact  group, then 
$$
C_{BSE}(\Delta(L^{1}(G, A)))= {M(G, C_{BSE}(\Delta(A)\widehat)}
$$
In particular, $L^{1}(G, A)$ is a $\textup{BSE}$- algebra if and only $A$ is so, where $A$ is a semisimple commutative unital Banach algebra.
If $A$ is a separable dual Banach algebra with predual $E$, then 
$$M(G, A) = C_{0}(G, E)^{*} = M(G)\hat\otimes A
$$
 as a Banach algebra and  isometric such that $C_{0}(G, E)$ is predual of $M(G,A)$.
\begin{lem}
If $A = E^{*}$ is a dual Banach algebra, then
$$
 M(G, A) = C_{0}(G, E)^{*}
$$
is a
dual Banach algebra.
\end{lem}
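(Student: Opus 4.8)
The plan is to invoke the standard criterion for dual Banach algebras: a Banach algebra $\mathcal{M}$ that is the dual of a Banach space $\mathcal{M}_{*}$ is a dual Banach algebra precisely when its multiplication is separately weak$^{*}$-continuous, equivalently when for each fixed $\nu$ the right multiplication $\mu\mapsto\mu\star\nu$ and the left multiplication $\mu\mapsto\nu\star\mu$ are adjoints of bounded operators on $\mathcal{M}_{*}$. Since it has already been recorded above that $M(G,A)=C_{0}(G,E)^{*}$ as Banach spaces and that $(M(G,A),\star)$ is a Banach algebra with this predual, the entire content of the lemma is the construction of these pre-adjoints on $C_{0}(G,E)$.

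The key algebraic input is the module structure carried by the predual. Since $A=E^{*}$ is a dual Banach algebra, both one-sided multiplications on $A$ are weak$^{*}$-continuous, so for $a,b\in A$ and $e\in E$ the maps $a\mapsto\langle ab,e\rangle$ and $b\mapsto\langle ab,e\rangle$ are weak$^{*}$-continuous on $A$; hence there are unique elements $b\cdot e,\ e\cdot a\in E$ with $\langle a,b\cdot e\rangle=\langle ab,e\rangle=\langle b,e\cdot a\rangle$ and $\|b\cdot e\|\le\|b\|\,\|e\|$, $\|e\cdot a\|\le\|a\|\,\|e\|$. Now fix $\nu\in M(G,A)$, regarded as a regular measure on $G\times A_{1}$ as above. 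For an elementary tensor $f\otimes e\in C_{0}(G)\otimes E$ I would define $S_{\nu}(f\otimes e)\in C_{0}(G,E)$ by the $E$-valued integral $S_{\nu}(f\otimes e)(x)=\int_{G\times A_{1}}f(xy)\,(b\cdot e)\,d\nu(y,b)$. That this really lies in $C_{0}(G,E)$ is checked by two routine estimates: continuity follows from continuity of left translation in $C_{0}(G)$ together with $\|b\cdot e\|\le\|e\|$ and finiteness of $|\nu|$; vanishing at infinity follows because, given $\varepsilon>0$, one picks a compact $K\subseteq G$ with $|\nu|((G\setminus K)\times A_{1})<\varepsilon$ and a compact $K_{0}\subseteq G$ outside which $|f|<\varepsilon$, and then $x\notin K_{0}K^{-1}$ forces $xy\notin K_{0}$ for all $y\in K$. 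Since $\|S_{\nu}(f\otimes e)\|_{\infty}\le\|f\|_{\infty}\|e\|\,\|\nu\|$ and finite sums of elementary tensors are dense in $C_{0}(G,E)$, this extends to a bounded operator $S_{\nu}\colon C_{0}(G,E)\to C_{0}(G,E)$ with $\|S_{\nu}\|\le\|\nu\|$.

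Next I would verify that $\langle\mu\star\nu,F\rangle=\langle\mu,S_{\nu}F\rangle$ for all $\mu\in M(G,A)$ and $F\in C_{0}(G,E)$: on an elementary tensor $F=f\otimes e$ this reduces, via the convolution formula recalled above and the identity $\langle ab,e\rangle=\langle a,b\cdot e\rangle$, to interchanging the two finite integrals (Fubini, legitimate since $\mu,\nu$ have finite variation and the integrand is bounded and measurable in the framework in which $\star$ was defined), and the general case follows by density and the uniform bound. This exhibits right multiplication by $\nu$ as $S_{\nu}^{*}$, hence weak$^{*}$-continuous. Symmetrically, for fixed $\mu$ one defines $T_{\mu}\colon C_{0}(G,E)\to C_{0}(G,E)$ on elementary tensors by $T_{\mu}(f\otimes e)(y)=\int_{G\times A_{1}}f(xy)\,(e\cdot a)\,d\mu(x,a)$ — now using right-translation continuity in $C_{0}(G)$ for the membership in $C_{0}(G,E)$ — and obtains $\langle\mu\star\nu,F\rangle=\langle\nu,T_{\mu}F\rangle$ with no Fubini needed, so left multiplication by $\mu$ equals $T_{\mu}^{*}$ and is weak$^{*}$-continuous. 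Hence $\star$ is separately weak$^{*}$-continuous and $M(G,A)=C_{0}(G,E)^{*}$ is a dual Banach algebra.

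I expect the main obstacle to be technical rather than conceptual: verifying that $S_{\nu}(f\otimes e)$ and $T_{\mu}(f\otimes e)$ genuinely belong to $C_{0}(G,E)$ (the continuity and vanishing-at-infinity estimates), justifying the interchange of integrals in the regular vector-measure setting, and keeping straight which one-sided module action of $E$ is used on each side. Everything else — density of elementary tensors, the bound $\|S_{\nu}\|\le\|\nu\|$, and the passage from the pre-adjoint identity on tensors to all of $C_{0}(G,E)$ — is routine.
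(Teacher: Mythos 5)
Your proposal is correct, and it rests on the same criterion the paper uses — that $M(G,A)=C_{0}(G,E)^{*}$ is a dual Banach algebra precisely when the predual $C_{0}(G,E)$ is invariant under the bimodule actions of $M(G,A)$ on its dual, i.e.\ when the one-sided multiplications are adjoints of bounded operators on $C_{0}(G,E)$ — but the execution is genuinely different. The paper simply observes that in the scalar case $C_{0}(G)\ast M(G)\subseteq C_{0}(G)$ and $M(G)\ast C_{0}(G)\subseteq C_{0}(G)$ and then asserts the vector-valued inclusions with no construction; as written, that argument never invokes the hypothesis that $A=E^{*}$ is a \emph{dual Banach algebra} (rather than merely a dual space), which is exactly where the content lies, since the convolution on $M(G\times A_{1})$ involves products $ab$ in $A$ and one needs the module actions $b\cdot e$, $e\cdot a$ of $A$ on $E$ (coming from weak$^{*}$-continuity of multiplication in $A$) to land back in $E$. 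Your proof supplies precisely this missing ingredient: the explicit pre-adjoints $S_{\nu}$, $T_{\mu}$ built from $\langle a,b\cdot e\rangle=\langle ab,e\rangle=\langle b,e\cdot a\rangle$, together with the translation-continuity and inner-regularity estimates showing $S_{\nu}(f\otimes e),T_{\mu}(f\otimes e)\in C_{0}(G,E)$, and the correct bookkeeping of which side needs Fubini given the order of integration in the paper's convolution formula. What the paper's route buys is brevity via the scalar model; what yours buys is a self-contained verification that actually uses the standing hypothesis on $A$ and makes the predual module structure explicit — in effect, your writeup is the argument the paper's two-line sketch tacitly presupposes. The remaining details you defer (density of elementary tensors in $C_{0}(G,E)$, Fubini for finite regular Borel measures on $G\times A_{1}$ with $A_{1}$ weak$^{*}$-compact, commuting a continuous functional past the $E$-valued integral) are indeed routine in the framework the paper sets up.
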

\begin{proof}
Since, $M(G) = C_{0}(G)^{*}$ is a dual algebra, then $C_{0}(G) * M(G)\subseteq  M(G)$ and
$M(G) * C_{0}(G) \subseteq C_{0}(G)$. So
$$
C_{0}(G, E) * M(G, A) \subseteq C_{0}(G, E).
$$
and 
$$
M(G, A) * C_{0}(G, E)\subseteq C_{0}(G, E).
$$
This complete the proof.
\end{proof}
The next Lemma is applied in some further results. 

\begin{lem}\label{Lsv}
Let $A$ be a commutative Banach algebra and $G$ be an Abelian locally compact group. Then\\
(i) $L^{1}(G, A)$ is a Banach algebra under convolution product .\\
(ii) $L^{1}(G, A)$ has a bounded $\Delta$- weak approximate identity if and only if $A$ has so.\\
(iii) $L^{1}(G, A)$ has a bounded approximate identity if and only if $A$ has a bounded approximate identity.\\
(iv) $L^{1}(G, A)$ is semisimple if and only if $A$ is semisimple.

\end{lem}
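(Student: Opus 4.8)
\medskip
\noindent\textbf{Proof proposal.} Write $L^{1}(G,A)\cong L^{1}(G)\widehat{\otimes}A$ and recall $\Delta(L^{1}(G,A))=\widehat{G}\times\Delta(A)$ with $(\chi\otimes\varphi)(h)=\int_{G}\overline{\chi(x)}\varphi(h(x))\,dx$. Part (i) needs nothing new: $L^{1}(G)$ is a commutative Banach algebra under convolution, the projective tensor product of two commutative Banach algebras is again one, and under the identification above the product becomes $(f\ast g)(x)=\int_{G}f(y)g(y^{-1}x)\,dy$ with $\|f\ast g\|_{1,A}\le\|f\|_{1,A}\|g\|_{1,A}$; commutativity comes from that of $G$ and $A$. (A direct verification via Fubini's theorem and submultiplicativity of $\|\cdot\|_{A}$ is equally short.)

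The plan for (ii) and (iii) rests on two elementary observations. First, for any commutative Banach algebra $B$, a bounded net $(b_{\alpha})$ is a $\Delta$-weak approximate identity if and only if $\psi(b_{\alpha})\to1$ for every $\psi\in\Delta(B)$: since $\psi(b_{\alpha}b)=\psi(b_{\alpha})\psi(b)$, the defining condition $\psi(b_{\alpha}b)\to\psi(b)$ for all $b$ is the same as $\psi(b_{\alpha})\to1$ (pick $b$ with $\psi(b)\ne0$). Second, the total-integral map $\Theta\colon L^{1}(G,A)\to A$, $\Theta(h)=\int_{G}h(x)\,dx$, is a norm-decreasing \emph{surjective} algebra homomorphism from $(L^{1}(G,A),\ast)$ onto $A$ (surjectivity: $\Theta(f\otimes a)=a$ whenever $\int_{G}f=1$; multiplicativity: Fubini and left-invariance of Haar measure), and for $\varphi\in\Delta(A)$ one has $\varphi\circ\Theta=\varepsilon\otimes\varphi$, with $\varepsilon$ the trivial character of $G$.

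For the ``if'' directions of (ii) and (iii), let $(u_{\alpha})$ be a bounded approximate identity of $L^{1}(G)$ (which exists for every locally compact group) and $(e_{\beta})$ the relevant (bounded, resp. $\Delta$-weak bounded) approximate identity of $A$; consider the bounded product net $(u_{\alpha}\otimes e_{\beta})$. On elementary tensors $(u_{\alpha}\otimes e_{\beta})\ast(f\otimes a)=(u_{\alpha}\ast f)\otimes(e_{\beta}a)$, from which the norm statement of (iii) follows on elementary tensors and then, by a standard $3\varepsilon$-argument using boundedness and density of elementary tensors, on all of $L^{1}(G,A)$. For (ii) one computes directly $(\chi\otimes\varphi)\bigl((u_{\alpha}\otimes e_{\beta})\ast h\bigr)=\varphi(e_{\beta})\,\chi(u_{\alpha})\,(\chi\otimes\varphi)(h)$ — using $\bigl((u_{\alpha}\otimes e_{\beta})\ast h\bigr)(x)=e_{\beta}\cdot(u_{\alpha}\ast h)(x)$ and multiplicativity of $\varphi$ and $\chi$ — and this tends to $(\chi\otimes\varphi)(h)$ since $\varphi(e_{\beta})\to1$ and $\chi(u_{\alpha})\to1$; the first observation then concludes. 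For the ``only if'' directions one applies $\Theta$: if $(E_{\gamma})$ is a bounded (resp. $\Delta$-weak bounded) approximate identity of $L^{1}(G,A)$, then $(\Theta(E_{\gamma}))$ is one of $A$ — directly because $\Theta$ is a continuous surjective homomorphism in the first case, and because $\varphi(\Theta(E_{\gamma}))=(\varepsilon\otimes\varphi)(E_{\gamma})\to1$ for every $\varphi\in\Delta(A)$ (first observation again) in the second.

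Part (iv): if $a\in\mathrm{rad}(A)\setminus\{0\}$ and $0\ne f\in L^{1}(G)$ then $h=f\otimes a\ne0$ while $(\chi\otimes\varphi)(h)=\widehat{f}(\chi)\varphi(a)=0$ for all $\chi,\varphi$, so $L^{1}(G,A)$ is not semisimple — the easy direction. Conversely, suppose $A$ is semisimple and $h\in L^{1}(G,A)$ has $\widehat{h}\equiv0$. For each $\varphi\in\Delta(A)$ the scalar function $\varphi\circ h\in L^{1}(G)$ has vanishing Fourier transform, hence $\varphi\circ h=0$ a.e. because $L^{1}(G)$ is semisimple; therefore $\varphi\bigl(\int_{E}h\,dx\bigr)=\int_{E}\varphi(h(x))\,dx=0$ for every $\varphi\in\Delta(A)$ and every Borel $E\subseteq G$ of finite measure, so $\int_{E}h\,dx=0$ by semisimplicity of $A$. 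Since the total variation of the indefinite Bochner integral $E\mapsto\int_{E}h\,dx$ equals $E\mapsto\int_{E}\|h(x)\|_{A}\,dx$, this forces $\|h\|_{A}=0$ a.e., i.e. $h=0$. I expect this final passage — from ``$\varphi\circ h=0$ a.e. for each individual $\varphi$'' to ``$h=0$ a.e.'' — to be the only genuinely delicate point, which is exactly why the argument is routed through the indefinite integral and its total variation rather than through an attempt to intersect the (possibly uncountably many) $\varphi$-dependent null sets.
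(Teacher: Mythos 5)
Your proposal is correct, but it proceeds quite differently from the paper, which handles this lemma almost entirely by citation: (i) is Kaniuth's Proposition 1.5.4 identifying $L^{1}(G,A)$ with $L^{1}(G)\widehat{\otimes}A$; (ii) is quoted from Proposition 3.2 of the vector-valued group algebra paper \cite{AR}; (iv) is quoted from Corollary 2.11.3 and Theorem 2.11.8 of \cite{kan}; and for (iii) the paper uses the product net $(f_{\gamma}\otimes a_{\alpha})$ in one direction and asserts, for the converse, that if $(h_{\beta})$ is a bounded approximate identity of $L^{1}(G,A)$ then $(h_{\beta}(x))$ is one for $A$. You instead give self-contained arguments: the characterization of bounded $\Delta$-weak approximate identities by $\psi(b_{\alpha})\to 1$, the augmentation homomorphism $\Theta(h)=\int_{G}h(x)\,dx$ (norm-decreasing, surjective, multiplicative by Fubini and invariance, with $\varphi\circ\Theta=\varepsilon\otimes\varphi$) to push approximate identities from $L^{1}(G,A)$ down to $A$, and for (iv) a direct semisimplicity proof via $\varphi\circ h\in L^{1}(G)$ together with the identity $|\nu|(E)=\int_{E}\|h\|_{A}\,dx$ for the indefinite Bochner integral. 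What your route buys is independence from \cite{AR} and from Kaniuth's tensor-product semisimplicity theorems, plus a cleaner ``only if'' direction in (iii): your $\Theta$-argument is a genuine proof, whereas the paper's claim that the pointwise net $(h_{\beta}(x))$ is a bounded approximate identity for $A$ is at best unjustified as stated. What the paper's route buys is brevity and, in (iv), a statement valid verbatim under the cited tensor-product theorems; your direct argument is equally valid here, using only the paper's own description of $\Delta(L^{1}(G,A))$ as $\widehat{G}\times\Delta(A)$ (needed for the easy direction) and standard facts about Bochner integrals and vector measures.
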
 
\begin{proof}
(i) According to [\cite{kan}, Proposition 1.5.4.],  $L^{1}(G, A)$ is
isometrically isomorphic with $L^{1}(G)\widehat\otimes A$, then $L^{1}(G, A)$ is a Banach algebra under convolution product.\\
(ii) By Proposition 3.2. of \cite{AR}, $L^{1}(G, A)$ has a bounded $\Delta$- weak approximate identity if and only if $A$ has so.\\
(iii) If $(f_{\gamma})$ is a bounded approximate identity for $L^{1}(G)$ and $(a_{\alpha})$ is a bounded approximate identity for $A$, then $(f_{\gamma}\otimes a_{\alpha})$ is a bounded approximate identity for $L^{1}(G, A)$.
Conversely, if $(h_{\beta})$ is a bounded approximate identity for  $L^{1}(G, A)$, then $(h_{\beta}(x))$ is a bounded approximate identity for $A$.\\
(iv) Due to Corollary 2.11.3 and Theorem 2.11.8 of \cite{kan}, $L^{1}(G, A)$ is semisimple if and only if $A$ is semisimple; See \cite{D}.

\end{proof}

\begin{thm}
Let $A$ be a semisimple commutative Banach algebra and $G$ be a locally compact group. Then 
$$
{ L^{1}(G, A)}^{*}= L^{\infty}(G, A^{*}).
$$
Isometrically as Banach spaces.
\end{thm}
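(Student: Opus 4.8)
The plan is to deduce the identity from the classical scalar duality $L^1(G)^{*}=L^\infty(G)$ by factoring through the projective tensor product. First I would recall from [\cite{kan}, Proposition 1.5.4] that $L^1(G,A)$ is isometrically isomorphic to $L^1(G)\hat\otimes A$. The universal property of the projective tensor norm then identifies $(L^1(G)\hat\otimes A)^{*}$ isometrically with the space of bounded bilinear forms on $L^1(G)\times A$, equivalently with the space $\mathcal{L}(L^1(G),A^{*})$ of bounded linear operators $L^1(G)\to A^{*}$ under the operator norm. So the theorem reduces to an isometric identification $\mathcal{L}(L^1(G),A^{*})\cong L^\infty(G,A^{*})$, which I would exhibit concretely through the pairing
$$
\langle f,\Phi\rangle=\int_{G}\langle\Phi(x),f(x)\rangle\,dx\qquad(f\in L^1(G,A),\ \Phi\in L^\infty(G,A^{*})).
$$

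The inclusion $L^\infty(G,A^{*})\hookrightarrow L^1(G,A)^{*}$ is the easy half. For $\Phi\in L^\infty(G,A^{*})$ the integral above converges absolutely, since $|\langle\Phi(x),f(x)\rangle|\le\|\Phi(x)\|_{A^{*}}\|f(x)\|_{A}\le\|\Phi\|_{\infty}\|f(x)\|_{A}$, and the resulting functional $F_{\Phi}$ obeys $\|F_{\Phi}\|\le\|\Phi\|_{\infty}$. For the reverse inequality I would use the standard localisation argument: strong measurability forces $\Phi$ to have essentially separable range, so $\|\Phi\|_{\infty}$ is computed on a countable norming family $\{a_{n}\}$ in the unit ball of $A$; given $c<\|\Phi\|_{\infty}$, pick $n$ and a set $E$ of positive finite Haar measure on which $\mathrm{Re}\,\langle\Phi(x),a_{n}\rangle>c$, and test $F_{\Phi}$ against $|E|^{-1}\chi_{E}\otimes a_{n}$ to obtain $\|F_{\Phi}\|>c$. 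Hence $\Phi\mapsto F_{\Phi}$ is an isometry onto its image.

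Surjectivity is the crux. Given $F\in L^1(G,A)^{*}$, for each $a\in A$ the map $f\mapsto F(f\cdot a)$ on $L^1(G)$ (with $(f\cdot a)(x)=f(x)a$) is linear with norm at most $\|F\|\,\|a\|_{A}$, hence by $L^1(G)^{*}=L^\infty(G)$ equals integration against a unique $g_{a}\in L^\infty(G)$ with $\|g_{a}\|_{\infty}\le\|F\|\,\|a\|_{A}$; moreover $a\mapsto g_{a}$ is linear. To glue these scalar data into an $A^{*}$-valued function I would fix a linear isometric lifting $\rho\colon L^\infty(G)\to\mathcal{L}^\infty(G)$ and set $\langle\Phi(x),a\rangle:=\rho(g_{a})(x)$, so that for each fixed $x$ the assignment $a\mapsto\rho(g_{a})(x)$ is a linear functional of norm $\le\|F\|$ and thus $\Phi(x)\in A^{*}$ with $\|\Phi\|_{\infty}\le\|F\|$. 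Then $F=F_{\Phi}$ on every elementary tensor $f\otimes a$, hence on all of $L^1(G,A)$ by density, and combining with the isometry of the previous paragraph completes the proof. The point I expect to be genuinely delicate is the measurability of $x\mapsto\Phi(x)$: the construction yields only weak-$*$ measurability, so to place $\Phi$ inside $L^\infty(G,A^{*})$ in the strong (Bochner) sense one must either interpret the right-hand side as the space of essentially bounded weak-$*$ measurable $A^{*}$-valued functions or invoke a Radon--Nikodym-type hypothesis on $A^{*}$ (cf.\ the RNP discussion in Section 2); I would fix the operative convention explicitly before starting.
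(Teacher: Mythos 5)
Your proposal follows essentially the same route as the paper's own proof: the easy half is the same pairing $F_{\Phi}(f)=\int_G\langle\Phi(x),f(x)\rangle\,dx$ with $\|F_{\Phi}\|\le\|\Phi\|_{\infty}$, and the surjectivity argument is the same slicing device, namely applying the scalar duality $L^1(G)^{*}=L^\infty(G)$ to $h\mapsto P(h\otimes a)$ to get $g_a\in L^\infty(G)$ and then reassembling via $g(x)(a):=g_a(x)$; the paper does not pass through the tensor-product identification $(L^1(G)\hat\otimes A)^{*}\cong\mathcal{L}(L^1(G),A^{*})$, but that framing changes nothing essential. The genuine added value in your write-up is the final caveat, and you should be aware that it is precisely the point the paper's proof skips: the paper simply declares ``$g\in L^\infty(G,A^{*})$'' after defining $g(x)(a):=g_a(x)$, ignoring both the need to choose representatives of the classes $g_a$ consistently (your lifting $\rho$ handles this) and the fact that the resulting function is in general only weak-$*$ measurable, not Bochner measurable. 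As stated, with $L^\infty(G,A^{*})$ understood in the Bochner sense, the theorem needs an extra hypothesis (e.g.\ $A^{*}$ with the Radon--Nikodym property with respect to Haar measure, or the weak-$*$ interpretation of the right-hand side), which is consistent with the separable dual assumptions the paper imposes elsewhere but is absent from this theorem's hypotheses; so your insistence on fixing the operative convention before starting is not pedantry but repairs an actual gap in the paper's argument.
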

\begin{proof}
If 
\begin{align*}
F:  L^{\infty}(G, A^{*}) &\to  { L^{1}(G, A)}^{*}\\
                                         & g\mapsto F_{g}
\end{align*}
where $F_{g}(f)= \int_{G} g(x)(f(x))dx$, for $f\in L^{1}(G, A)$. Thus 
\begin{align*}
\mid F_{g}(f)\mid &\leq \int_{G}\mid g(x)(f(x))\mid dx\\
                 &\leq \int_{G}\|g(x)\|_{A^{*}}.\|f(x)\|_{A}dx\\
                 &\leq \|g\|_{\infty, A^{*}}.\|f\|_{1,A}
\end{align*}
 It is clear that $F_{g}$ is linear. So
$$
F_{g}\in  { L^{1}(G, A)}^{*}, ~ and~ , ~ \|F_{g}\|\leq \|g\|_{\infty, A^{*}}
$$
If $P\in { L^{1}(G, A)}^{*}$ and $a\in A$, then for $h\in L^{1}(G)$ we define:
$$
P_{a}(h):= P(h\otimes a)
$$
Thus $P_{a}\in {L^{1}(G)}^{*}= L^{\infty}(G)$. Therefore there exists $g_{a}\in L^{\infty}(G)$ such that
$$
P(h\otimes a)= \int_{G}g_{a}(x)h(x)dx.
$$
Define $g(x)(a) := g_{a}(x)$, for $x\in G$. Then $g\in  L^{\infty}(G, A^{*})$ and $F_{g}= P$, thus $F$ is surjective. If $f\in  L^{1}(G, A)$, then there exist some sequence $(f_{n})$ in $L^{1}(G)$ and $(a_{n})$ in $A$ such that
$$
f= \sum_{n=1}^{\infty}f_{n}\otimes a_{n}
$$
where  $ \sum_{n=1}^{\infty}\|f_{n}\|_{1}.\|a_{n}\|_{A}<\infty$. The following is the yield:
\begin{align*}
P(f) &= \underset{n}{lim}P(\sum_{k=1}^{n}f_{k}\otimes a_{k})\\
      &= \underset{n}{lim}\sum_{k=1}^{n}F_{g}(f_{k}\otimes a_{k})\\
     &= \underset{n}{lim}\sum_{k=1}^{n}\int_{G}g(x)(a_{k})f_{k}(x)dx\\
      &= \underset{n}{lim}\int_{G}g(x)(\sum_{k=1}^{n}f_{k}\otimes a_{k}(x))dx\\
        &= \int_{G}g(x)(f(x))dx\\
       &= F_{g}(f)
\end{align*}
Therefore $F_{g}=P$. In another hand $\|P_{a}\|=\|g_{a}\|_{\infty}$, for all $a\in A_{1}$. Therefore
\begin{align*}
|F_{g}(h\otimes a)| &= |P(h\otimes a)|=|P_{a}(h)| 
\end{align*}
for $h\in L^{1}(G)$ and so 
$$
\|F_{g}\|\geq sup\{ |P_{a}(h)|: \|h\|_{1}\leq 1\}= \|g_{a}\|_{\infty}
$$
Thus 
$$
\|F_{g}\|\geq sup\{|g(x)(a)|: x\in G, \|a\|\leq 1\}= \|g\|_{\infty, A^{*}}
$$
As a result $\|F_{g}\|= \|g\|_{\infty, A^{*}}$, for all $g\in  L^{\infty}(G, A^{*})$ and so $F$ is isometry. Therefore $F$ is an isometric isomorphism map.
\end{proof}

In this section, the correlations between the \textup{BSE}- property and \textup{BED}- property of Banach algebra $A$ and $L^{1}(G, A)$ are assessed.
%
\begin{lem}\label{rel1}
(i) If $g\in L^{1}(G)$ and $\|g\|_{1}\leq 1$, then
$$
\|\sum_{i=1}^{n} c_{i}\chi_{i}(g)\varphi_{i}\|_{A^{*}}\leq \|\sum_{i=1}^{n} c_{i} (\chi_{i}\otimes\varphi_{i})\|_{{L^{1}(G, A)}^{*}}
$$
(ii)
If  $\chi_{0}\in\hat G$ and $\varphi_{i}\in\Delta(A)$, then 
$$
\|\sum_{i=1}^{n} c_{i}(\chi_{0}\otimes \varphi_{i})\|_{{L^{1}(G,A)}^{*}}\leq  \|\sum_{i=1}^{n} c_{i} \varphi_{i}\|_{A^{*}}
$$ 
(iii) If $\sigma \in C_{\textup{BSE}}(\Delta(A))$, then 
$$
\|\sum_{i=1}^{n} c_{i}\sigma(\varphi_{i})\chi_{i}\|_{{L_{1}(G)}^{*}}\leq \|\sigma\|_{\textup{BSE}}\|\sum_{i=1}^{n} c_{i}(\chi_{i}\otimes\varphi_{i})\|_{{L^{1}(G, A)}^{*}}
$$ 
\end{lem}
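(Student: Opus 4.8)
The plan is to obtain all three inequalities by testing the functionals in question against suitable elements of the appropriate $L^{1}$-space, using the explicit description of the characters $\chi\otimes\varphi$ together with the fact that the algebraic tensor product $L^{1}(G)\otimes A$ is dense in $L^{1}(G,A)=L^{1}(G)\widehat{\otimes}A$. For (i), I would fix $a\in A$ with $\|a\|_{A}\le 1$ and pair the functional $\sum_{i=1}^{n}c_{i}(\chi_{i}\otimes\varphi_{i})$ with the elementary tensor $g\otimes a\in L^{1}(G,A)$, which has norm $\|g\|_{1}\|a\|_{A}\le 1$. Since $(\chi_{i}\otimes\varphi_{i})(g\otimes a)=\chi_{i}(g)\varphi_{i}(a)$, this pairing equals $\big(\sum_{i=1}^{n}c_{i}\chi_{i}(g)\varphi_{i}\big)(a)$, so
$$\Big|\Big(\sum_{i=1}^{n}c_{i}\chi_{i}(g)\varphi_{i}\Big)(a)\Big|\le\Big\|\sum_{i=1}^{n}c_{i}(\chi_{i}\otimes\varphi_{i})\Big\|_{{L^{1}(G,A)}^{*}},$$
and taking the supremum over $\|a\|_{A}\le 1$ yields (i).

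For (ii), I would fix $f\in L^{1}(G,A)$ with $\|f\|_{1,A}\le 1$ and use the integral formula for $\chi_{0}\otimes\varphi_{i}$ to write
$$\Big(\sum_{i=1}^{n}c_{i}(\chi_{0}\otimes\varphi_{i})\Big)(f)=\int_{G}\overline{\chi_{0}(x)}\Big(\sum_{i=1}^{n}c_{i}\varphi_{i}\Big)(f(x))\,dx.$$
Since $|\chi_{0}(x)|=1$ for every $x\in G$, the modulus of the right-hand side is at most $\int_{G}\big\|\sum_{i=1}^{n}c_{i}\varphi_{i}\big\|_{A^{*}}\|f(x)\|_{A}\,dx\le\big\|\sum_{i=1}^{n}c_{i}\varphi_{i}\big\|_{A^{*}}$, and the supremum over such $f$ gives (ii).

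For (iii), the idea is to combine the defining inequality of the $\textup{BSE}$-norm of $\sigma$ with part (i). Fixing $g\in L^{1}(G)$ with $\|g\|_{1}\le 1$ and applying that inequality with the scalars $c_{i}\chi_{i}(g)$ and the characters $\varphi_{i}$ gives
$$\Big|\sum_{i=1}^{n}c_{i}\chi_{i}(g)\sigma(\varphi_{i})\Big|\le\|\sigma\|_{\textup{BSE}}\Big\|\sum_{i=1}^{n}c_{i}\chi_{i}(g)\varphi_{i}\Big\|_{A^{*}}.$$
By (i) the right-hand side is bounded by $\|\sigma\|_{\textup{BSE}}\big\|\sum_{i=1}^{n}c_{i}(\chi_{i}\otimes\varphi_{i})\big\|_{{L^{1}(G,A)}^{*}}$, while the left-hand side equals $\big|\big(\sum_{i=1}^{n}c_{i}\sigma(\varphi_{i})\chi_{i}\big)(g)\big|$ (viewing each $\chi_{i}$ as an element of ${L^{1}(G)}^{*}=L^{\infty}(G)$ via the character action); taking the supremum over $\|g\|_{1}\le 1$ yields (iii).

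None of the steps presents a serious obstacle: they are routine manipulations of the duality pairings together with the elementary-tensor formula for $\chi\otimes\varphi$. The only point that needs a little care is passing from the action of $\chi\otimes\varphi$ on elementary tensors to its action on a general $f\in L^{1}(G,A)$, which I would justify by density of $L^{1}(G)\otimes A$ in $L^{1}(G,A)$ and continuity of the functionals involved — exactly the bookkeeping already carried out in the computation of ${L^{1}(G,A)}^{*}=L^{\infty}(G,A^{*})$.
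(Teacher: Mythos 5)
Your proposal is correct and follows essentially the same route as the paper: testing $\sum_i c_i(\chi_i\otimes\varphi_i)$ against elementary tensors $g\otimes a$ for (i), using the integral formula $(\chi_0\otimes\varphi)(f)=\int_G\overline{\chi_0(x)}\varphi(f(x))\,dx$ with $|\chi_0|=1$ for (ii), and combining the BSE inequality for $\sigma$ (with scalars $c_i\chi_i(g)$) with part (i) for (iii). The paper's own proof does exactly these computations, so no further comparison is needed.
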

\begin{proof}
(i)
\begin{align*}
\|\sum_{i=1}^{n} c_{i} (\chi_{i}\otimes\varphi_{i})\|_{{L^{1}(G, A)}^{*}} &= \textup{sup}\{ |\sum_{i=1}^{n} c_{i}(\chi_{i}\otimes \varphi_{i})(h)| :~ h\in L^{1}(G,A), \|h\|_{1,A}\leq 1\}\\
                                                                                                                       &\geq \textup{sup}\{ |\sum_{i=1}^{n} c_{i}(\chi_{i}\otimes \varphi_{i})(g\otimes a)| :~  \|a\|_{A}\leq 1\}\\
                                                                                                                        &= \textup{sup}\{ |\sum_{i=1}^{n} c_{i}\chi_{i}(g) \varphi_{i}( a)| :~  \|a\|_{A}\leq 1\}\\
                                                                                                                          &= \|\sum_{i=1}^{n} c_{i}\chi_{i}(g) \varphi_{i}\|_{A^{*}}
\end{align*}
(ii)
Since
$$\|\chi_{0}(h)\|\leq \int_{G}|\overline{\chi_{0}(x)}|.\|h(x)\|_{A}\leq \|h\|_{1,A}.$$
Thus
\begin{align*}
\|\sum_{i=1}^{n} c_{i}(\chi_{0}\otimes \varphi_{i})\|_{{L^{1}(G,A)}^{*}} &= \textup{sup}\{ |\sum_{i=1}^{n} c_{i}(\chi_{0}\otimes \varphi_{i})(h)| :~ h\in L^{1}(G,A), \|h\|_{1,A}\leq 1\}\\
                                                                                                                            &=  \textup{sup}\{ |(\sum_{i=1}^{n} c_{i} \varphi_{i})(\chi_{0}(h))| :~ h\in L^{1}(G,A), \|h\|_{1,A}\leq 1\}\\
                                                                                                                            &\leq     \textup{sup}\{ |\widehat{\chi_{0}(h)}(\sum_{i=1}^{n} c_{i} \varphi_{i})| :~ h\in L^{1}(G,A), \|h\|_{1,A}\leq 1\}\\
                                                                                                                               &\leq     \textup{sup}\{ \|{\chi_{0}(h)}\|.\|\sum_{i=1}^{n} c_{i} \varphi_{i}\|_{A^{*}} :~ h\in L^{1}(G,A), \|h\|_{1,A}\leq 1\}\\
                                                                                                                                &\leq \|\sum_{i=1}^{n} c_{i} \varphi_{i}\|_{A^{*}}
\end{align*}

(iii)
\begin{align*}
\|\sum_{i=1}^{n} c_{i}\sigma(\varphi_{i})\chi_{i}\|_{{L_{1}(G)}^{*}} &= sup \{ \mid \sum_{i=1}^{n} c_{i}\sigma(\varphi_{i})h(\chi_{i})\mid ~: \|h\|_{1}\leq 1\}\\
                                                                                                                   & \leq \|\sigma\|_{\textup{BSE}}. sup \{ \|\sum_{i=1}^{n} c_{i}h(\chi_{i})\varphi_{i}\|_{\infty} ~: \|h\|_{1}\leq 1\}
\end{align*}
But 
\begin{align*}
\|\sum_{i=1}^{n} c_{i}h(\chi_{i})\varphi_{i}\|_{\infty} &= sup\{ \mid\sum_{i=1}^{n} c_{i}h(\chi_{i})\varphi_{i}(a)\mid : \|a\|_{A}\leq 1\}\\
                                                                                          &= sup\{ \mid\sum_{i=1}^{n} c_{i}(\chi_{i}\otimes \varphi_{i})(h\otimes a)\mid : \|a\|_{A}\leq 1\}
\end{align*}
Thus 
\begin{align*}
\|\sum_{i=1}^{n} c_{i}\sigma(\varphi_{i})\chi_{i}\|_{{L_{1}(G)}^{*}} &\leq \|\sigma\|_{\textup{BSE}}. \underset{\|a\|\leq 1}{sup}\underset{\|h\|_{1}\leq 1}{sup} \{\|h\|_{1}.\|a\|. \|\sum_{i=1}^{n} c_{i}(\chi_{i}\otimes \varphi_{i})\|_{{L^{1}(G, A)}^{*}}\}\\
                                                                                                                  &\leq  \|\sigma\|_{\textup{BSE}}.\|\sum_{i=1}^{n} c_{i}(\chi_{i}\otimes \varphi_{i})\|_{{L^{1}(G, A)}^{*}}
\end{align*}
This completes the proof.
\end{proof}
\begin{lem}\label{l2.2}
If $A$ is a seperable dual Banach algebra and $\mu\in M(G, A)$ such that 
$$\hat{\mu}\in C_{\textup{BSE}}^{0}(\Delta(L^{1}(G, A)))$$
 then $\mu\in  L^{1}(G,A)$.
\end{lem}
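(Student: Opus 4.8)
The plan is to reduce the statement to the scalar fact that $L^{1}(G)$ is a $\textup{BED}$-algebra by \emph{slicing} $\mu$ with the characters of $A$, and then to recover $\mu$ from its scalar slices by a Lebesgue--decomposition argument. Write $A=E^{*}$; since $A$ is a separable dual Banach algebra it has the Radon--Nikodym property, $M(G,A)=M(G)\widehat{\otimes}A=C_{0}(G,E)^{*}$ isometrically, and $\Delta(L^{1}(G,A))\cong\widehat G\times\Delta(A)$, as recalled in the Introduction.

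First I would fix $\varphi\in\Delta(A)$ and set $\nu_{\varphi}:=\varphi\circ\mu$; composing the bounded regular vector measure $\mu$ with the continuous functional $\varphi$ shows $\nu_{\varphi}\in M(G)$ (this is the identity $\Delta(A)\circ M(G,A)=M(G)$ obtained above). A direct computation with the definition of $\chi\otimes\varphi$ gives $\widehat{\nu_{\varphi}}(\chi)=\varphi\big(\int_{G}\overline{\chi(x)}\,d\mu(x)\big)=\widehat{\mu}(\chi\otimes\varphi)$ for every $\chi\in\widehat G$. Next, since $\|\varphi\circ h\|_{1}\le\|h\|_{1,A}$ for $h\in L^{1}(G,A)$ (because $\|\varphi\|_{A^{*}}\le1$), a computation analogous to that of Lemma~\ref{rel1}(ii) yields
$$\Big\|\sum_{i=1}^{n}c_{i}(\chi_{i}\otimes\varphi)\Big\|_{L^{1}(G,A)^{*}}\le\Big\|\sum_{i=1}^{n}c_{i}\chi_{i}\Big\|_{L^{1}(G)^{*}}$$
for all scalars $c_{i}$ and all $\chi_{i}\in\widehat G$. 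Now I would invoke the hypothesis $\widehat{\mu}\in C_{\textup{BSE}}^{0}(\Delta(L^{1}(G,A)))$: given $\epsilon>0$, take the associated compact set $\widetilde K\subseteq\widehat G\times\Delta(A)$ and let $K$ be its (compact) projection onto $\widehat G$; if $\chi_{i}\in\widehat G\setminus K$ then $(\chi_{i},\varphi)\notin\widetilde K$, so the two displays combine to give $|\sum_{i}c_{i}\widehat{\nu_{\varphi}}(\chi_{i})|\le\epsilon\|\sum_{i}c_{i}\chi_{i}\|_{L^{1}(G)^{*}}$. Hence $\widehat{\nu_{\varphi}}\in C_{\textup{BSE}}^{0}(\Delta(L^{1}(G)))$, and since $L^{1}(G)$ is a $\textup{BED}$-algebra and $M(G)$ is semisimple (the Fourier--Stieltjes transform is injective on $M(G)$), I conclude $\nu_{\varphi}=\varphi\circ\mu\in L^{1}(G)$; that is, $\varphi\circ\mu$ is absolutely continuous with respect to Haar measure for every $\varphi\in\Delta(A)$.

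To pass from the slices back to $\mu$, I would use the Lebesgue decomposition of $\mu$ with respect to Haar measure, $\mu=\mu_{a}+\mu_{s}$: here $\mu_{a}$ has bounded variation and is absolutely continuous with respect to Haar measure, hence lies in $L^{1}(G,A)$ by the Radon--Nikodym property of $A$, while $|\mu_{s}|$ is singular to Haar measure. For each $\varphi\in\Delta(A)$ this gives $\varphi\circ\mu=\varphi\circ\mu_{a}+\varphi\circ\mu_{s}$ with $\varphi\circ\mu_{a}\in L^{1}(G)$ and $\varphi\circ\mu_{s}$ singular to Haar measure; since $\varphi\circ\mu\in L^{1}(G)$ by the previous step, uniqueness of the Lebesgue decomposition forces $\varphi\circ\mu_{s}=0$ for every $\varphi\in\Delta(A)$. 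As $A$ is semisimple, $\Delta(A)$ separates the points of $A$, so $\mu_{s}(E)=0$ for every Borel set $E$; therefore $\mu=\mu_{a}\in L^{1}(G,A)$.

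The first two steps are essentially routine (the norm inequality is the one behind Lemma~\ref{rel1}(ii), and replacing $\widetilde K$ by its projection is formal). The main obstacle is the last step: the fact that every scalar slice $\varphi\circ\mu$ is absolutely continuous does not immediately transfer to $\mu$, because the linear span of $\Delta(A)$ need not be norm-dense in $A^{*}$ and so no duality argument is available; one genuinely needs the vector-valued Lebesgue decomposition together with the Radon--Nikodym property of the separable dual algebra $A$, plus the elementary observation that a character of $A$ maps a Haar-singular vector measure to a Haar-singular scalar measure.
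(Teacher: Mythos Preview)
Your argument is correct and follows a genuinely different route from the paper's. The paper works directly with the projective-tensor representation $M(G,A)=M(G)\widehat{\otimes}A$: it writes $\mu=\sum_{n}\mu_{n}\otimes a_{n}$ with $\mu_{n}\in M(G)$ and then argues that each individual $\mu_{k}$ lies in $L^{1}(G)$ by choosing, for every $k$, a character $\varphi_{k}\in\Delta(A)$ with $\varphi_{k}(a_{k})=1$ and $\varphi_{k}(a_{j})=0$ for $j\neq k$, so that the $\textup{BED}$-estimate for $\widehat{\mu}$ collapses to one for $\widehat{\mu_{k}}$; once every $\mu_{k}\in L^{1}(G)$, the series lands in $L^{1}(G)\widehat{\otimes}A=L^{1}(G,A)$. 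You instead slice $\mu$ by \emph{all} characters $\varphi\in\Delta(A)$, deduce $\varphi\circ\mu\in L^{1}(G)$ from the scalar $\textup{BED}$ theorem via the inequality $\|\sum_{i}c_{i}(\chi_{i}\otimes\varphi)\|_{L^{1}(G,A)^{*}}\le\|\sum_{i}c_{i}\chi_{i}\|_{L^{1}(G)^{*}}$, and then reassemble $\mu$ through the vector Lebesgue decomposition together with the Radon--Nikodym property of the separable dual $A$ and the semisimplicity of $A$. What your approach buys is robustness: it never requires characters that isolate the particular tensor factors $a_{n}$, a biorthogonality that an arbitrary projective-tensor representation need not provide and that the paper invokes without justification. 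The paper's approach, by contrast, stays entirely inside the tensor calculus and avoids any vector-valued Radon--Nikodym or Lebesgue-decomposition machinery, which makes it shorter when such separating characters happen to be available.
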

\begin{proof}
Let $\hat{\mu}\in C_{\textup{BSE}}^{0}(\Delta(L^{1}(G, A))$. Then for all $\epsilon>0$, there exists some compact set $K\subset\hat{G}\times \Delta(A)$ where for all $c_{i}\in\mathbb C$ and $\psi_{i}= \chi_{i}\otimes \varphi_{i}\notin K$ the following is yield:
$$
|\sum_{i=1}^{n} c_{i}\hat{\mu}(\chi_{i}\otimes\varphi_{i})|<\frac{\epsilon}{2}
$$
where 
$$\|\sum_{i=1}^{n} c_{i}(\chi_{i}\otimes\varphi_{i})\|_{ L^{1}(G,A)^{*}}\leq 1$$
 Assume that $\mu= \sum_{n=1}^{\infty}\mu_{n}\otimes a_{n}$ where $\mu_{n}\in M(G)$, $a_{n}\in A$  and $\sum_{n=1}^{\infty}\|\mu_{n}\|\|a_{n}\|<\infty$. Now, we will 
proved that $\mu_{n}\in L^{1}(G)$ and $\mu\in  L^{1}(G,A)$. We have:
\begin{align*}
|\sum_{i=1}^{n}c_{i}\hat{\mu}(\chi_{i}\otimes\varphi_{i})| &= |\sum_{i=1}^{n}c_{i}\lim_{m}\sum_{j=1}^{m}\widehat{\mu_{j}\otimes a_{j}}(\chi_{i}\otimes\varphi_{i})|\\
                                                                                                   &= \lim_{m}|\sum_{i=1}^{n}\sum_{j=1}^{m}c_{i}\hat{\mu_{j}}(\chi_{i})\varphi_{i}(a_{j})|< \frac{\epsilon}{2}
\end{align*}
As a  result, there exists some integer number $N$ such that for all $m\geq N$ where 
$$|\sum_{i=1}^{n}\sum_{j=1}^{m}c_{i}\hat{\mu_{j}}(\chi_{i})\varphi_{i}(a_{j})|< \epsilon$$
such that
 $$\|\sum_{i=1}^{n}\chi_{i}\otimes \varphi_{i}\|_{{L^{1}(G, A)}^{*}}\leq 1.$$
If $k\in\mathbb N$, $\varphi_{k}(a_{k})=1$, $\varphi_{k}(a_{j})= 0$ for $1\leq j\leq m, j\neq k$ where $m\geq k+N$. Set $\varphi_{i}= \varphi_{k}$ for $1\leq i\leq n$. Then
$$
|\sum_{i=1}^{n}\sum_{j=1}^{m}c_{i}\hat{\mu_{j}}(\chi_{i})\varphi_{i}(a_{j})|<\epsilon
$$
This means that
\begin{align*}
|\sum_{i=1}^{n}c_{i}\hat{u_{k}}(\chi_{i})| &\leq\epsilon \|\sum_{i=1}^{n}c_{i} (\chi_{i}\otimes \varphi_{k})\|_{ L^{1}(G,A)^{*}}\\
                                                                         &\leq \epsilon \|\sum_{i=1}^{n}c_{i} \chi_{i}\|_{ {L^{1}(G)}^{*}}
\end{align*}
Thus 
$$\hat{\mu_{k}}\in C_{\textup{BSE}}^{0}(\Delta(L^{1}(G)))= L^{1}(G\widehat)$$
 so 
$$\hat{\mu_{k}}\in {L^{1}(G\widehat)}$$
 and
$$\mu= \sum_{k=1}^{\infty}\mu_{k}\otimes a_{k}$$
 such that $\sum_{k=1}^{\infty}\|\mu_{k}\|\| a_{k}\|<\infty$.
 Therefore 
$$
\mu\in L^{1}(G)\hat\otimes A= L^{1}(G, A)
$$
\end{proof}
\begin{lem}\label{lfc0}

Let $A$ be a commutative semisimple unital  Banach algebra and $G$ be a locally compact group.\\
(i)
 If $\mu\in M(G)$ and $\sigma\in  C_{\textup{BSE}}(\Delta(A))$, then ${\hat \mu}\otimes \sigma\in C_{\textup{BSE}}(\Delta(L^{1}(G,A))$.\\
(ii)
If $f\in L^{1}(G)$ and $\sigma\in  C_{\textup{BSE}}^{0}(\Delta(A))$, then ${\hat f}\otimes \sigma\in C_{\textup{BSE}}^{0}(\Delta(L^{1}(G,A))$.
\end{lem}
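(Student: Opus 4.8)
The plan is to factor both functions through three ``coordinate'' functions on $\Delta(L^{1}(G,A))\cong\widehat{G}\times\Delta(A)$ --- here $\widehat{G}=\Delta(L^{1}(G))$ and $\Delta(A)$ is compact since $A$ is unital --- and then to exploit that $C_{\textup{BSE}}(\Delta(L^{1}(G,A)))$ is a subalgebra of $C_{b}(\widehat{G}\times\Delta(A))$ (\cite{E6}) in which $C_{\textup{BSE}}^{0}(\Delta(L^{1}(G,A)))$ is a closed ideal ([\cite{F1}, Corollary 3.9]). The three facts I would prove are: (a) if $\sigma\in C_{\textup{BSE}}(\Delta(A))$, then $1_{\widehat{G}}\otimes\sigma\in C_{\textup{BSE}}(\Delta(L^{1}(G,A)))$; (b) if $\mu\in M(G)$, then $\widehat{\mu}\otimes 1_{\Delta(A)}\in C_{\textup{BSE}}(\Delta(L^{1}(G,A)))$; (c) if $f\in L^{1}(G)$, then $\widehat{f}\otimes 1_{\Delta(A)}\in C_{\textup{BSE}}^{0}(\Delta(L^{1}(G,A)))$. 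Granting these, part (i) follows at once by writing $\widehat{\mu}\otimes\sigma=(\widehat{\mu}\otimes 1_{\Delta(A)})\cdot(1_{\widehat{G}}\otimes\sigma)$, a pointwise product of two members of the algebra $C_{\textup{BSE}}(\Delta(L^{1}(G,A)))$; and part (ii) follows by writing $\widehat{f}\otimes\sigma=(\widehat{f}\otimes 1_{\Delta(A)})\cdot(1_{\widehat{G}}\otimes\sigma)$, a product of a member of the closed ideal $C_{\textup{BSE}}^{0}(\Delta(L^{1}(G,A)))$ with a member of $C_{\textup{BSE}}(\Delta(L^{1}(G,A)))$ (here one only needs $\sigma\in C_{\textup{BSE}}^{0}(\Delta(A))\subseteq C_{\textup{BSE}}(\Delta(A))$).

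For (a) the crux is the inequality $\|\sum_{i}c_{i}\varphi_{i}\|_{A^{*}}\le\|\sum_{i}c_{i}(\chi_{i}\otimes\varphi_{i})\|_{{L^{1}(G,A)}^{*}}$, valid for all finite families $c_{i}\in\mathbb{C}$, $\chi_{i}\otimes\varphi_{i}\in\widehat{G}\times\Delta(A)$; I would obtain it from Lemma \ref{rel1}(i) by taking a net $(g_{\alpha})$ in $L^{1}(G)$ with $g_{\alpha}\ge 0$, $\|g_{\alpha}\|_{1}=1$ and supports contracting to the identity $e$, so that $\chi_{i}(g_{\alpha})=\int_{G}\overline{\chi_{i}(y)}g_{\alpha}(y)\,dy\to\overline{\chi_{i}(e)}=1$; hence $\sum_{i}c_{i}\chi_{i}(g_{\alpha})\varphi_{i}\to\sum_{i}c_{i}\varphi_{i}$ in $A^{*}$ and the estimate of Lemma \ref{rel1}(i) passes to the limit. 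Combined with the defining $\textup{BSE}$- inequality for $\sigma$ this gives (a), with $\|1_{\widehat{G}}\otimes\sigma\|_{\textup{BSE}}\le\|\sigma\|_{\textup{BSE}}$. For (b): since $L^{1}(G)$ is a $\textup{BSE}$- algebra (\cite{RD1}), $\widehat{\mu}\in C_{\textup{BSE}}(\Delta(L^{1}(G)))$; and since $A$ is unital, $1_{\Delta(A)}=\widehat{1_{A}}\in C_{\textup{BSE}}(\Delta(A))$ with $\|1_{\Delta(A)}\|_{\textup{BSE}}\le\|1_{A}\|_{A}=:c_{A}$. Applying Lemma \ref{rel1}(iii) with $\sigma=1_{\Delta(A)}$ gives $\|\sum_{i}c_{i}\chi_{i}\|_{{L^{1}(G)}^{*}}\le c_{A}\|\sum_{i}c_{i}(\chi_{i}\otimes\varphi_{i})\|_{{L^{1}(G,A)}^{*}}$, and inserting this into the $\textup{BSE}$- inequality for $\widehat{\mu}$ on $\Delta(L^{1}(G))$ yields (b) (with $\textup{BSE}$- norm at most $c_{A}\|\widehat{\mu}\|_{\textup{BSE}}$). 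For (c): $\widehat{f}\otimes 1_{\Delta(A)}$ lies in $C_{\textup{BSE}}(\Delta(L^{1}(G,A)))$ already by (b); moreover, since $L^{1}(G)$ is a $\textup{BED}$- algebra (\cite{RD1}), $\widehat{f}\in C_{\textup{BSE}}^{0}(\Delta(L^{1}(G)))$, so for each $\epsilon>0$ there is a compact $K\subseteq\widehat{G}$ with $|\sum_{i}c_{i}\widehat{f}(\chi_{i})|\le(\epsilon/c_{A})\|\sum_{i}c_{i}\chi_{i}\|_{{L^{1}(G)}^{*}}$ whenever every $\chi_{i}\notin K$. The rectangle $K\times\Delta(A)$ is compact in $\widehat{G}\times\Delta(A)$, and if $(\chi_{i},\varphi_{i})\notin K\times\Delta(A)$ for all $i$, then every $\chi_{i}\notin K$ (because $\varphi_{i}\in\Delta(A)$ always), so the estimate from (b) upgrades to $|\sum_{i}c_{i}\widehat{f}(\chi_{i})|\le\epsilon\|\sum_{i}c_{i}(\chi_{i}\otimes\varphi_{i})\|_{{L^{1}(G,A)}^{*}}$, which is exactly (c).

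The step I expect to be the main obstacle is (a): the approximate-point-mass limiting argument, where one must ensure the $g_{\alpha}$ can be chosen in $L^{1}(G)$ with $\|g_{\alpha}\|_{1}=1$ and that $\chi_{i}(g_{\alpha})\to 1$. This is routine from local compactness of $G$ and continuity of the characters, but it is the only place where the argument leaves the purely Banach-algebraic formalism. A secondary point to keep in view is that ``infinity'' in $\widehat{G}\times\Delta(A)$ is concentrated entirely in the $\widehat{G}$-factor, $\Delta(A)$ being compact; this is precisely what makes the rectangle $K\times\Delta(A)$ in (c) compact, and it is where unitality of $A$ is genuinely used in part (ii).
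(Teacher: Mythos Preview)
Your argument is correct, but it takes a more circuitous path than the paper's. The paper never introduces your auxiliary step~(a) (the inequality $\|\sum_i c_i\varphi_i\|_{A^{*}}\le\|\sum_i c_i(\chi_i\otimes\varphi_i)\|_{L^{1}(G,A)^{*}}$ via approximate point masses) and never factors $\widehat{\mu}\otimes\sigma$ as a pointwise product. Instead it applies Lemma~\ref{rel1}(iii) \emph{with the given $\sigma$} directly: for part~(i) one bounds
\[
\Bigl|\sum_i c_i\widehat{\mu}(\chi_i)\sigma(\varphi_i)\Bigr|
=\Bigl|\int_G\sum_i c_i\sigma(\varphi_i)\overline{\chi_i(x)}\,d\mu(x)\Bigr|
\le\|\mu\|\,\Bigl\|\sum_i c_i\sigma(\varphi_i)\chi_i\Bigr\|_{L^{1}(G)^{*}}
\le\|\mu\|\,\|\sigma\|_{\textup{BSE}}\Bigl\|\sum_i c_i(\chi_i\otimes\varphi_i)\Bigr\|_{L^{1}(G,A)^{*}},
\]
and for part~(ii) one invokes the $\textup{BED}$ inequality for $\widehat{f}$ on $\widehat{G}$ with coefficients $c_i\sigma(\varphi_i)$, then again Lemma~\ref{rel1}(iii), taking $K=F\times\Delta(A)$ exactly as you do. Thus the paper's proof is a single estimate per part, whereas your route splits into (a), (b), (c) plus the algebra/ideal structure of $C_{\textup{BSE}}$ and $C_{\textup{BSE}}^{0}$.

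What each approach buys: the paper's direct computation is shorter and avoids the approximate-identity limit you flagged as the ``main obstacle'' --- that step is simply unnecessary. Your factorization, on the other hand, is more modular: once (a)--(c) are in hand, any product of such one-variable pieces is handled automatically, and the algebra/ideal structure does the bookkeeping. Your closing remark --- that part~(ii) actually only uses $\sigma\in C_{\textup{BSE}}(\Delta(A))$, not the stronger $C_{\textup{BSE}}^{0}$ hypothesis --- is a genuine observation and is equally visible in the paper's proof, where only $\|\sigma\|_{\textup{BSE}}$ enters.
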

\begin{proof}
(i)
Assume that $c_{i}\in\mathbb C$, $\varphi_{i}\in\Delta(A)$ and $\chi_{i}\in \hat G$. Then by applying \ref{rel1}, the following is yield: 
\begin{align*}
|\sum_{i=1}^{n}c_{i}{\hat \mu}\otimes \sigma(\chi_{i}\otimes\varphi_{i})| &= |\sum_{i=1}^{n}c_{i}\hat{\mu}(\chi_{i})\sigma(\varphi_{i})|\\
                                                                                       &= \mid \hat{\mu}(\sum_{i=1}^{n}c_{i}\sigma(\varphi_{i})\chi_{i})|\\
                                                                                       &= \mid \int_{G}\sum_{i=1}^{n}c_{i}\sigma(\varphi_{i})\overline{\chi_{i}(x)}d\mu(x)\mid\\
                                                                                      &\leq\|\sigma\|_{\textup{BSE}}\|\mu\|\|\sum_{i=1}^{n}c_{i}(\chi_{i}\otimes\varphi_{i})\|_{ L^{1}(G,A)^{*}}
\end{align*}
Thus ${\hat \mu}\otimes\sigma\in C_{\textup{BSE}}(\Delta(L^{1}(G,A))$.\\
(ii)
 Let $f\in L^{1}(G)$. Then $\hat{f}\in L^{1}(G\widehat)= C_{\textup{BSE}}^{0}(\Delta(L^{1}(G)))$. So for all $\epsilon>0$ there exist some compact set $F\subseteq \hat G$ such that for all $c_{i}\in \mathbb C$ and $\chi_{i}\notin F$,
the following is the yield:
$$
|\sum_{i=1}^{n}c_{i}{\hat f}(\chi_{i})|< \epsilon\|\sum_{i=1}^{n}c_{i}\chi_{i}\|_{\infty}.
$$
Set $K:= F\times \Delta(A)$, so by applying \ref{rel1} part (iii), for each $\chi_{i}\otimes \varphi_{i}\notin K$, we have
\begin{align*}
|\sum_{i=1}^{n}c_{i}{\hat f}\otimes \sigma(\chi_{i}\otimes\varphi_{i})| &= |\sum_{i=1}^{n}c_{i}\hat{f}(\chi_{i})\sigma(\varphi_{i})|\\
                                                                                       &\leq\epsilon \|\sum_{i=1}^{n}c_{i}\sigma(\varphi_{i})\chi_{i}\|_{{L^{1}(G)}^{*}}\\
                                                                                      &\leq\epsilon \|\sigma\|_{\textup{BSE}}\|\sum_{i=1}^{n}c_{i}(\chi_{i}\otimes\varphi_{i})\|_{ L^{1}(G,A)^{*}}
\end{align*}
Thus ${\hat f}\otimes\sigma\in C_{\textup{BSE}}^{0}(\Delta(L^{1}(G,A))$.
\end{proof}
\begin{lem}\label{lbed}
Let $A$ be a semisimple commutative Banach algebra and $G$ be a locally compact group.
If $L^{1}(G, A)$  is a $\textup{BED}$- algebra, then  $A$ is a  $\textup{BED}$- algebra.
\end{lem}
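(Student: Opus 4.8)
The plan is to transfer the BED-structure from $L^1(G,A)$ down to $A$ by embedding $A$ into $L^1(G,A)$ via a fixed rank-one factor in the $G$-variable. Concretely, fix $f_0\in L^1(G)$ with $\|f_0\|_1\le 1$ and $\hat f_0(\chi_0)=1$ for some chosen $\chi_0\in\hat G$ (possible since the Gelfand transform on $L^1(G)$ is onto a dense subalgebra of $C_0(\hat G)$ and we may rescale). The embedding is $a\mapsto f_0\otimes a$. The characters of $L^1(G,A)$ are the $\chi\otimes\varphi$, and on elements of the form $f_0\otimes a$ we have $(\chi\otimes\varphi)(f_0\otimes a)=\hat f_0(\chi)\varphi(a)$; in particular $(\chi_0\otimes\varphi)(f_0\otimes a)=\varphi(a)$. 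So the composite sends $\sigma\in C_{\mathrm{BSE}}(\Delta(A))$ to the function $\chi\otimes\varphi\mapsto$ (something restricting to $\sigma$ along the slice $\{\chi_0\}\times\Delta(A)$).

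First I would show: if $A$ is semisimple commutative and $L^1(G,A)$ is a BED-algebra, take $\sigma\in C^0_{\mathrm{BSE}}(\Delta(A))$; I must produce $a\in A$ with $\hat a=\sigma$. The idea is to promote $\sigma$ to a BED-function on $\Delta(L^1(G,A))$. Using Lemma \ref{rel1}(ii), for $\chi_0$ fixed and any $c_i\in\mathbb C$, $\varphi_i\in\Delta(A)$,
$$
\Bigl\|\sum_i c_i(\chi_0\otimes\varphi_i)\Bigr\|_{L^1(G,A)^*}\le\Bigl\|\sum_i c_i\varphi_i\Bigr\|_{A^*},
$$
so any inequality witnessing $\sigma\in C^0_{\mathrm{BSE}}(\Delta(A))$ with a compact exceptional set $K\subseteq\Delta(A)$ pulls back to the same inequality on the slice $\{\chi_0\}\times\Delta(A)$ with exceptional set $\{\chi_0\}\times K$. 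The genuine obstacle is that $\{\chi_0\}\times K$ is \emph{not} all we need to control: a BED-function on $\hat G\times\Delta(A)$ must have small BSE-sums off a compact subset of the \emph{whole} product, whereas the slice only handles the single fiber over $\chi_0$. To handle the fibers over $\chi\ne\chi_0$ I would multiply by a $\hat G$-factor that decays: pick $g_0\in L^1(G)$ with $\hat g_0\in C_0(\hat G)$, $\hat g_0(\chi_0)=1$, and consider $\hat g_0\otimes\sigma$ on $\hat G\times\Delta(A)$. By Lemma \ref{lfc0}(ii) this lies in $C^0_{\mathrm{BSE}}(\Delta(L^1(G,A)))$. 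Since $L^1(G,A)$ is a BED-algebra, $\hat g_0\otimes\sigma=\widehat{h}$ for some $h\in L^1(G,A)$.

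Then I would recover $a\in A$ from $h$ by ``slicing'' at $\chi_0$: using Corollary \ref{} for $A^*\circ L^1(G,A)=L^1(G)$, or more directly the character identity $(\chi_0\otimes\varphi)(h)=\varphi\bigl(\int_G\overline{\chi_0(x)}h(x)\,dx\bigr)$, set $a:=\int_G\overline{\chi_0(x)}h(x)\,dx\in A$. Then for every $\varphi\in\Delta(A)$,
$$
\hat a(\varphi)=\varphi(a)=(\chi_0\otimes\varphi)(h)=\widehat h(\chi_0\otimes\varphi)=\hat g_0(\chi_0)\sigma(\varphi)=\sigma(\varphi),
$$
so $\hat a=\sigma$, i.e.\ $\sigma\in\hat A$. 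Conversely $\hat A\subseteq C^0_{\mathrm{BSE}}(\Delta(A))$ is automatic from the general inclusions (the Gelfand transform of any $a\in A$ is a BED-function because... — actually this inclusion should be cited; if $A$ is a BED-algebra one direction is trivial, but here we only need $C^0_{\mathrm{BSE}}(\Delta(A))\subseteq\hat A$, the nontrivial containment, together with the standing fact $\hat A\subseteq C^0_{\mathrm{BSE}}(\Delta(A))$ which holds since each $\hat a$ is the Gelfand transform of an $L^1$-type element — I would invoke the analogue of Example \ref{} or the known fact that semisimple $A$ always has $\hat A\subseteq C_{\mathrm{BSE}}(\Delta(A))$ and here the element arises with the decay needed). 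Hence $C^0_{\mathrm{BSE}}(\Delta(A))=\hat A$ and $A$ is a BED-algebra.

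The step I expect to be the main obstacle is the one flagged above: ensuring that the exceptional compact set for the pulled-back/produced function genuinely covers the behavior over \emph{all} fibers $\chi\in\hat G$, not just over $\chi_0$ — this is exactly why the $\hat g_0$-factor (with $\hat g_0\in C_0(\hat G)$) is essential, and the clean way to package it is Lemma \ref{lfc0}(ii), so the real content of the proof is choosing $g_0$ correctly and then reading off $a$ from the slice integral. A secondary technical point is checking that $a=\int_G\overline{\chi_0(x)}h(x)\,dx$ is well-defined and lies in $A$ (Bochner integrability of $h$), which is immediate from $h\in L^1(G,A)$.
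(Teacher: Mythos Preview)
Your argument for the inclusion $C^0_{\mathrm{BSE}}(\Delta(A))\subseteq\hat A$ is essentially the paper's: choose $f\in L^1(G)$ and $\chi_0\in\hat G$ with $\hat f(\chi_0)=1$, use Lemma~\ref{lfc0}(ii) to get $\hat f\otimes\sigma\in C^0_{\mathrm{BSE}}(\Delta(L^1(G,A)))=\widehat{L^1(G,A)}$, and slice at $\chi_0$ to recover $a\in A$ with $\hat a=\sigma$. Your $a=\int_G\overline{\chi_0(x)}h(x)\,dx$ is exactly what the paper writes as $\chi_0(g)$.

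The genuine gap is the reverse inclusion $\hat A\subseteq C^0_{\mathrm{BSE}}(\Delta(A))$, which you dismiss as ``automatic'' or citable. It is not: for a general semisimple commutative $A$ one always has $\hat A\subseteq C_{\mathrm{BSE}}(\Delta(A))$, but the extra decay required for $C^0_{\mathrm{BSE}}$ is an honest condition that can fail, and nothing in the hypotheses of the lemma (no unitality of $A$, hence $\Delta(A)$ need not be compact) lets you collapse $C^0_{\mathrm{BSE}}$ to $C_{\mathrm{BSE}}$. The paper uses the BED hypothesis on $L^1(G,A)$ a second time here: given $a\in A$, push it up to $f\otimes a\in L^1(G,A)$ so that $\hat f\otimes\hat a=\widehat{f\otimes a}\in C^0_{\mathrm{BSE}}(\Delta(L^1(G,A)))$; take the compact exceptional set $K\subseteq\hat G\times\Delta(A)$ furnished by that membership, project it to $F:=\pi_2(K)\subseteq\Delta(A)$, and for $\varphi_i\notin F$ evaluate the BED inequality along the fixed slice $\chi_i=\chi_0$, bounding $\bigl\|\sum_i c_i(\chi_0\otimes\varphi_i)\bigr\|_{L^1(G,A)^*}\le\bigl\|\sum_i c_i\varphi_i\bigr\|_{A^*}$ by Lemma~\ref{rel1}(ii). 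This yields $\hat a\in C^0_{\mathrm{BSE}}(\Delta(A))$. You have all the ingredients for this step (you even quote Lemma~\ref{rel1}(ii) earlier), but you applied them only in the wrong direction; the fix is to run the same slice-and-project idea with $\hat a$ in place of $\sigma$.
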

\begin{proof}
If $\chi_{0}\in \hat G$,  $f\in L^{1}(G)$ such that $\chi_{0}(f)=1$ and $\sigma\in  C_{\textup{BSE}}^{0}(\Delta(A))$, then  by applying  Lemma \ref{Lsv}, we have
$$\hat{f}\otimes\sigma\in  C_{\textup{BSE}}^{0}(\Delta(L^{1}(G, A)))= {L^{1}(G, A\widehat )}.$$
Thus there exists some $g\in L^{1}(G, A)$ where $\hat{f}\otimes\sigma= \hat g$, so
\begin{align*}
\sigma(\varphi)  &= f(\chi_{0})\sigma(\varphi)\\
                           &= \hat{g}(\chi_{0}\otimes\varphi)= \widehat{\chi_{0}(g)}(\varphi)
\end{align*}
and so $\sigma =  \widehat{\chi_{0}(g)}\in \hat A$. Therefore 
$$C_{\textup{BSE}}^{0}(\Delta(A))\subseteq \hat A.$$

    For the reverse inclusion, if $a\in A$, $\chi_{0}\in \hat G$ and  $f\in L^{1}(G)$ such that $\chi_{0}(f)=1$, then 
$$\hat{f}\otimes \hat{a}\in  C_{\textup{BSE}}^{0}(\Delta(L^{1}(G, A))).$$ 
So  for all $\epsilon> 0$, there exists some compact set $K\subseteq \hat{G}\otimes \Delta(A)$ where  the following inequality holds:
$$
 |\sum_{i=1}^{n} c_{i}\hat{f}\otimes \hat{a}(\chi_{i}\otimes \varphi_{i})|\leq \epsilon\|\sum_{i=1}^{n} c_{i}(\chi_{i}\otimes \varphi_{i})\|_{{L^{1}(G,A)}^{*}}
$$
when $\chi_{i}\otimes \varphi_{i}\in \hat{G}\otimes \Delta(A)\backslash K$. If 
$$
S: ~ \hat{G}\otimes \Delta(A)\to \Delta(A)
$$
  where 
$$
\chi\otimes\varphi\mapsto \varphi
$$
Thus $S$ is continuous. Set $F:= S(K)$ is compact. If $\varphi_{i}\in\Delta(A)\backslash F$, then $\chi_{0}\otimes \varphi_{i}\notin K$. This implies that
\begin{align*}
|\sum_{i=1}^{n} c_{i}\varphi_{i}(a)| &= |\sum_{i=1}^{n} c_{i}(\hat{f}\otimes \hat{a})(\chi_{0}\otimes\varphi_{i})|\\
                                                              & \leq \epsilon\|\sum_{i=1}^{n} c_{i}(\chi_{0}\otimes \varphi_{i})\|_{{L^{1}(G,A)}^{*}}\\
                                                               & \leq \epsilon\|\sum_{i=1}^{n} c_{i} \varphi_{i}\|_{A^{*}}.
\end{align*}
Then $\hat{a}\in C_{\textup{BSE}}^{0}(\Delta(A))$ and so $\hat A\subseteq C_{\textup{BSE}}^{0}(\Delta(A))$. Therefore 
$$
C_{\textup{BSE}}^{0}(\Delta(A))= \hat A
$$
This means that $A$ is a $\textup {BED}$- algebra.

\end{proof}
\begin{rem}
If $A$ is unital and $ L^{1}(G, A)$ is $\textup{BED}$- algebra, then $ L^{1}(G, A)$ has a bounded approximate identity. Thus it has $\Delta$- weak bounded approximate identity, so $ L^{1}(G, A)$ is a $\textup{BSE}$- algebra. Then $A$ is a $\textup{BSE}$- algebra.  
But $A$ is unital, so $A$ is a $\textup{BED}$-  algebra.
\end{rem}

Let $A$ be a  commutative Banach algebra with the identity of norm one.   Then $M( L^{1}(G, A))= M(G, A)$; see \cite{TE}. Also $A$ is a $\textup{BSE}$ [resp. $\textup{BED}$] algebra if and only if $(A, \| |.| \|)$, where $\| |a| \|= \frac{1}{\|e_{A}\|}. \|a\|_{A}$.
Hence without of generality, we can assume that $\|e_{A}\|=1$ in the rest of the paper.
\begin{thm}\label{tlbed}
Let $A$ be commutative separable semisimple unital dual Banach algebra. Then  $ L^{1}(G, A)$ is a $\textup{BED}$- algebra  if and only if $A$ is a $\textup{BED}$- algebra.
\end{thm}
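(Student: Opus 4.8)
The plan is to prove the two implications separately, using the lemmas already established. The forward direction ($L^1(G,A)$ BED $\Rightarrow$ $A$ BED) is already exactly the content of Lemma \ref{lbed}, which applies in this setting since $A$ is in particular semisimple and commutative. So the only real work is the converse: assume $A$ is a $\textup{BED}$-algebra, i.e. $C_{\textup{BSE}}^{0}(\Delta(A)) = \hat A$, and deduce $C_{\textup{BSE}}^{0}(\Delta(L^1(G,A))) = \widehat{L^1(G,A)}$.

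\medskip

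\textbf{Proof sketch.} The inclusion $\widehat{L^1(G,A)} \subseteq C_{\textup{BSE}}^{0}(\Delta(L^1(G,A)))$ is the easy half: since $A$ is unital with $\|e_A\|=1$ and $L^1(G,A)$ is a $\textup{BED}$-algebra in the scalar case recalled in the Example, one can build approximations of a general $\hat h$, $h \in L^1(G,A)$, by finite sums $\sum f_k \otimes a_k$ with $f_k \in L^1(G)$, $a_k \in A$; by Lemma \ref{lfc0}(ii) each $\hat{f_k} \otimes \hat{a_k}$ lies in $C_{\textup{BSE}}^{0}(\Delta(L^1(G,A)))$ (using $\hat{a_k} = \sigma_k \in C_{\textup{BSE}}^0(\Delta(A))$ since $A$ is BED), and since $C_{\textup{BSE}}^{0}$ is a closed subspace of $C_{\textup{BSE}}$ and the general element is a norm-limit of such sums, $\hat h \in C_{\textup{BSE}}^{0}(\Delta(L^1(G,A)))$. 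The reverse inclusion is where the dual-algebra and separability hypotheses enter. Take $\sigma \in C_{\textup{BSE}}^{0}(\Delta(L^1(G,A)))$. Then $\sigma \in C_{\textup{BSE}}(\Delta(L^1(G,A))) = \widehat{M(G,A)}$ because $A$ is a unital BSE algebra and hence $L^1(G,A)$ is a BSE algebra by the result of \cite{AR} quoted before Lemma \ref{l2.2} (a BED function is in particular a BSE function, and $C_{\textup{BSE}}^0$ is an ideal of $C_{\textup{BSE}}$). So $\sigma = \hat\mu$ for some $\mu \in M(G,A)$. Now Lemma \ref{l2.2} applies verbatim: a separable dual Banach algebra $A$ together with $\hat\mu \in C_{\textup{BSE}}^{0}(\Delta(L^1(G,A)))$ forces $\mu \in L^1(G,A)$, so $\sigma = \hat\mu \in \widehat{L^1(G,A)}$.

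\medskip

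\textbf{Combining the two inclusions} gives $C_{\textup{BSE}}^{0}(\Delta(L^1(G,A))) = \widehat{L^1(G,A)}$, which is precisely the statement that $L^1(G,A)$ is a $\textup{BED}$-algebra. For the forward direction, invoke Lemma \ref{lbed} directly.

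\medskip

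\textbf{The main obstacle} I anticipate is making sure the BSE identification $C_{\textup{BSE}}(\Delta(L^1(G,A))) = \widehat{M(G,A)}$ is legitimately available: this rests on the cited result of \cite{AR} that $L^1(G,A)$ is a BSE algebra when $A$ is a unital semisimple commutative BSE algebra, together with the identification $M(L^1(G,A)) = M(G,A)$ recalled just before Theorem \ref{tlbed}. Once that is in hand, the argument is essentially an assembly of Lemma \ref{lfc0}(ii) (for the easy inclusion, together with closedness of $C_{\textup{BSE}}^0$) and Lemma \ref{l2.2} (for the hard inclusion). The separability and dual-algebra hypotheses are used only through Lemma \ref{l2.2}, where the decomposition $\mu = \sum \mu_n \otimes a_n$ with $M(G,A) = M(G)\hat\otimes A$ is the key structural fact; I would double-check that the Example's identity $C_{\textup{BSE}}^{0}(\Delta(L^1(G))) = \widehat{L^1(G)}$ really does the bookkeeping for the scalar pieces $\mu_n$, which is the step that ultimately transmits ``BED'' from the group to the vector-valued setting.
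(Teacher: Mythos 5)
Your proposal matches the paper's own proof essentially step for step: the forward implication via Lemma \ref{lbed}, and the converse by combining the identification $C_{\textup{BSE}}(\Delta(L^1(G,A)))=\widehat{M(G,A)}$ (from \cite{AR}, since $A$ unital and BED is BSE) with Lemma \ref{l2.2} for the inclusion $C_{\textup{BSE}}^{0}\subseteq\widehat{L^1(G,A)}$, and Lemma \ref{lfc0}(ii) plus closedness of $C_{\textup{BSE}}^{0}$ for the reverse inclusion. No substantive differences; your explicit remark that $\hat{a_k}\in C_{\textup{BSE}}^{0}(\Delta(A))$ uses the BED hypothesis on $A$ is a point the paper leaves implicit.
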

\begin{proof}
Let $A$ be a $\textup{BED}$- algebra and $\sigma\in C_{\textup{BSE}}^{0}(\Delta( L^{1}(G, A)))$. Then due to the \cite{AR} we have
$$
\sigma\in C_{\textup{BSE}}^{0}(\Delta( L^{1}(G, A)))\subseteq C_{\textup{BSE}}(\Delta( L^{1}(G, A)))= {M(G, A\hat)}
$$ 
Since $A$ is unital and $\textup{BED}$- algebra, so $A$ is a $\textup{BSE}$- algebra.Then $ L^{1}(G, A)$ is a $\textup{BSE}$- algebra and moreover 
$$M(G, A\hat)= {\cal M}(L^{1}(G, A)).$$
 Therefore there exists some $\mu\in M(G, A)$ where $\sigma= \hat \mu$.
Since 
$$M(G, A)= M(G)\hat\otimes A$$
 there exists some sequence $(\mu_{n})$ in $M(G)$ and $(a_{n})$ in $A$ such that 
$$\mu = \sum_{n=1}^{\infty} \mu_{n}\otimes a_{n}$$
 and 
$$\sum_{n=1}^{\infty} \|\mu_{n}\|.\|a_{n}\|< \infty.$$
By applying Lemma \ref{l2.2} $\mu\in L^{1}(G, A)$. Thus 
$$
C_{\textup{BSE}}^{0}(\Delta( L^{1}(G, A)))\subseteq {L^{1}(G, A \widehat)}
$$
Conversely, if $f\in L^{1}(G, A)$, then there exist some sequences $(f_{n})$ in $L^{1}(G)$ and  $(a_{n})$ in $A$  where
 $$
f= \sum_{n=1}^{\infty} f_{n}\otimes a_{n}
$$ 
with 
$$\sum_{n=1}^{\infty}\| f_{n}\| \|a_{n}\|<\infty.$$ 
Therefore $f= \lim_{n}g_{n}$ where
\begin{align*}
 \widehat{g_{n}} &= \sum_{k=1}^{n}{(f_{k}\otimes a_{k}\widehat)}\\
                      &=  \sum_{k=1}^{n}{\hat{f_{k}}\otimes \widehat{a_{k}}}\in C_{\textup{BSE}}^{0}(\Delta( L^{1}(G, A)))
\end{align*}
 at a result $\hat{f}\in C_{\textup{BSE}}^{0}(\Delta( L^{1}(G, A)))$. In fact, since  $g_{n}\to f$, so $\hat{g_{n}}\to \hat{f}$ and
$C_{\textup{BSE}}^{0}(\Delta( L^{1}(G, A)))$ is a closed ideal of $C_{\textup{BSE}}(\Delta( L^{1}(G, A)))$, so $\hat {f}\in C_{\textup{BSE}}^{0}(\Delta( L^{1}(G, A)))$. Consequently
$$
{ L^{1}(G, A\hat)}\subseteq C_{\textup{BSE}}^{0}(\Delta( L^{1}(G, A)))
$$  
Therefore $ L^{1}(G, A)$ is a $\textup{BED}$- algebra.

   Now, let $ L^{1}(G, A)$ be a $\textup{BED}$- algebra. Then by applying Lemma \ref{lbed}, $A$ is a $\textup{BED}$- algebra.

\end{proof}

\begin{lem}\label{lwbe}
Let  $A$ be a commutative semisimple unital Banach algebra,  $G$ be a locally compact Abelian group, and  $L^{1}(G, A)$ is a weak-$\textup{BED}$ algebra, then $A$ is a  weak-$\textup{BED}$ algebra and $G$ is discrete.
\end{lem}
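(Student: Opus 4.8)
The plan is to derive the two conclusions separately, each time feeding a well-chosen element into the weak-$\textup{BED}$ identity $C_{\textup{BSE}}^{0}(\Delta(L^{1}(G,A)))=\widehat{M(L^{1}(G,A))}$. First I would make the standing normalisations: since $A$ is unital we may assume $\|e_{A}\|=1$, so $\Delta(A)$ is compact, $\widehat{M(A)}=\widehat A$, and — using the compact set $K=\Delta(A)$ in the definition of a $\textup{BED}$-function — $C_{\textup{BSE}}^{0}(\Delta(A))=C_{\textup{BSE}}(\Delta(A))$; also $M(L^{1}(G,A))=M(G,A)$ by \cite{TE}, so the hypothesis becomes $C_{\textup{BSE}}^{0}(\Delta(L^{1}(G,A)))=\widehat{M(G,A)}$. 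Throughout I use $\Delta(L^{1}(G,A))\cong\hat G\times\Delta(A)$ and, extending the character formula for $L^{1}(G,A)$ by a Fubini argument, the identity $\widehat\mu(\chi\otimes\varphi)=\varphi\bigl(\int_{G}\overline{\chi(x)}\,d\mu(x)\bigr)$ for $\mu\in M(G,A)$ viewed as a multiplier of $L^{1}(G,A)$.

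To prove that $A$ is weak-$\textup{BED}$, observe that $\widehat A\subseteq C_{\textup{BSE}}(\Delta(A))=C_{\textup{BSE}}^{0}(\Delta(A))$ automatically, so only $C_{\textup{BSE}}^{0}(\Delta(A))\subseteq\widehat A$ needs proof. I would fix $\chi_{0}\in\hat G$ and $f\in L^{1}(G)$ with $\chi_{0}(f)=1$; then for $\sigma\in C_{\textup{BSE}}^{0}(\Delta(A))$, Lemma~\ref{lfc0}(ii) gives $\widehat f\otimes\sigma\in C_{\textup{BSE}}^{0}(\Delta(L^{1}(G,A)))=\widehat{M(G,A)}$, so $\widehat f\otimes\sigma=\widehat\mu$ for some $\mu\in M(G,A)$. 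Evaluating both sides at $\chi_{0}\otimes\varphi$ yields $\sigma(\varphi)=\chi_{0}(f)\sigma(\varphi)=\widehat\mu(\chi_{0}\otimes\varphi)=\varphi\bigl(\int_{G}\overline{\chi_{0}(x)}\,d\mu(x)\bigr)$, that is, $\sigma=\widehat a\in\widehat A$ with $a:=\int_{G}\overline{\chi_{0}(x)}\,d\mu(x)\in A$. Hence $C_{\textup{BSE}}^{0}(\Delta(A))=\widehat A=\widehat{M(A)}$.

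To prove that $G$ is discrete it suffices to show $M(G)=L^{1}(G)$, because $L^{1}(G)$ is a $\textup{BED}$-algebra (with $C_{\textup{BSE}}^{0}(\Delta(L^{1}(G)))=\widehat{L^{1}(G)}$) and $M(G)$ is semisimple. Given $\mu\in M(G)$, I would consider $\mu\otimes e_{A}\in M(G,A)$; its Gelfand transform is $\widehat\mu\otimes 1$, the Fourier--Stieltjes transform of $\mu$ in the first variable and constant in the second, so by the hypothesis $\widehat\mu\otimes 1\in C_{\textup{BSE}}^{0}(\Delta(L^{1}(G,A)))$. Given $\epsilon>0$, choose a compact $K\subseteq\hat G\times\Delta(A)$ realising the $\textup{BED}$ estimate for $\widehat\mu\otimes 1$, let $F$ be its (compact) image under the projection $\hat G\times\Delta(A)\to\hat G$, and fix $\varphi_{0}\in\Delta(A)$. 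Then for $\chi_{i}\notin F$ one has $\chi_{i}\otimes\varphi_{0}\notin K$, whence $|\sum c_{i}\widehat\mu(\chi_{i})|\le\epsilon\,\|\sum c_{i}(\chi_{i}\otimes\varphi_{0})\|_{L^{1}(G,A)^{*}}\le\epsilon\,\|\sum c_{i}\chi_{i}\|_{L^{1}(G)^{*}}$, the last inequality being proved as in Lemma~\ref{rel1} by passing an $h\in L^{1}(G,A)$ with $\|h\|_{1,A}\le1$ to $\varphi_{0}\circ h\in L^{1}(G)$ and using $\|\varphi_{0}\|\le1$. Thus $\widehat\mu\in C_{\textup{BSE}}^{0}(\Delta(L^{1}(G)))=\widehat{L^{1}(G)}$, so $\mu\in L^{1}(G)$, giving $M(G)=L^{1}(G)$ and hence $G$ discrete.

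I do not expect a deep obstacle here; the step requiring the most care is the one paralleling Lemma~\ref{lbed} in the last part — identifying $\widehat{\mu\otimes e_{A}}=\widehat\mu\otimes 1$ through $M(L^{1}(G,A))=M(G,A)$, and then ``collapsing'' the $\hat G\times\Delta(A)$-indexed $\textup{BED}$ inequality to a $\hat G$-indexed one via the projection together with the norm comparison above.
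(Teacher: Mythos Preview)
Your argument is correct. For the ``$G$ discrete'' half it coincides with the paper's: both take an element of $M(G)\hat\otimes A\subseteq M(G,A)$ (you use $\mu\otimes e_{A}$, the paper uses $\mu\otimes a$ with $\varphi(a)=1$), push its transform into $C_{\textup{BSE}}^{0}(\Delta(L^{1}(G,A)))$ via the hypothesis, project the compact $K$ onto $\hat G$, and collapse the norm using $\|\sum c_{i}(\chi_{i}\otimes\varphi)\|_{L^{1}(G,A)^{*}}\le\|\sum c_{i}\chi_{i}\|_{L^{1}(G)^{*}}$ to conclude $\widehat\mu\in\widehat{L^{1}(G)}$.

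The genuine difference is in how the weak-$\textup{BED}$ property for $A$ is obtained. The paper proves $G$ discrete \emph{first}, then invokes Lemma~\ref{gdis} to get $L^{1}(G,A)=M(G,A)$, so that the weak-$\textup{BED}$ hypothesis becomes the $\textup{BED}$ identity $C_{\textup{BSE}}^{0}(\Delta(L^{1}(G,A)))=\widehat{L^{1}(G,A)}$, and finally applies the implication of Lemma~\ref{lbed} (the paper cites Theorem~\ref{tlbed}, but only the direction of Lemma~\ref{lbed} is used) to deduce that $A$ is $\textup{BED}$, hence weak-$\textup{BED}$. You instead decouple the two conclusions: exploiting that $\Delta(A)$ is compact you note $C_{\textup{BSE}}^{0}(\Delta(A))=C_{\textup{BSE}}(\Delta(A))$, feed $\widehat f\otimes\sigma$ through Lemma~\ref{lfc0}(ii) directly into $\widehat{M(G,A)}$, and recover $\sigma=\widehat a$ with $a=\int_{G}\overline{\chi_{0}}\,d\mu$. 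Your route is shorter and does not need $G$ discrete as an input for this half; the paper's route has the virtue of recycling Lemma~\ref{lbed} wholesale rather than redoing a Gelfand-evaluation argument.
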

\begin{proof}
If
\begin{align*}
C_{\textup{BSE}}^{0}(\Delta(L^{1}(G, A)))= {M(L^{1}(G, A)\widehat)}= {M(G, A)}
\end{align*}
Now, we show that the 
$$C_{\textup{BSE}}^{0}(\Delta(L^{1}(G)))={M(G \widehat)}.$$
 Clearly, 
$$
 C_{\textup{BSE}}^{0}(\Delta(L^{1}(G)))\subseteq C_{\textup{BSE}}(\Delta(L^{1}(G)))= M(G\widehat)
$$
See \cite{WR}.
  For the reverse inclusion, take $\mu\in M(G)$. That $\hat{\mu}\in C_{\textup{BSE}}^{0}(\Delta(L^{1}(G)))$ is assessed. If $a\in A$, then 
$$\mu\otimes a\in M(G)\widehat\otimes A\subseteq M(G, A).$$
 This implies that 
$$
{(\mu\otimes a\widehat)}\in C_{\textup{BSE}}^{0}(\Delta(L^{1}(G, A))
$$
So 
  for all $\epsilon> 0$, there exists some compact set $K\subseteq \hat{G}\otimes \Delta(A)$ where for all $c_{i}\in\mathbb C$ and $\psi_{i}\in \hat{G}\otimes \Delta(A)\backslash K$, the following inequality
$$
 |\sum_{i=1}^{n} c_{i}\widehat{\mu\otimes a}(\chi_{i}\otimes \varphi_{i})|\leq \epsilon\|\sum_{i=1}^{n} c_{i}(\chi_{i}\otimes \varphi_{i})\|_{{L^{1}(G,A)}^{*}}
$$
holds. Define 
$$\Theta_{1}: \hat{G}\otimes\Delta(A)\to \hat{G}$$
 where 
$$\chi\otimes\varphi\mapsto \chi$$
 Thus $\Theta_{1}$ is continuous. So $F:= \Theta_{1}(K)$ is compact  in $\hat G$. If $\varphi\in\Delta(A)$ where $\varphi(a)=1$ and $\varphi_{i}=\varphi$, ($1\leq i\leq n$), the following is yield:
\begin{align*}
|\sum_{i=1}^{n} c_{i}\hat{\mu}(\chi_{i})| &= |\sum_{i=1}^{n} c_{i}{(\mu\otimes a\widehat)}(\chi_{i}\otimes\varphi_{i})|\\
                                                                       &\leq \epsilon\|\sum_{i=1}^{n} c_{i}(\chi_{i}\otimes\varphi)\|_{{L^{1}(G, A)}^{*}}\\
                                                                        &\leq \epsilon\|\sum_{i=1}^{n} c_{i}\chi_{i}\|_{{L^{1}(G)}^{*}}
\end{align*}
Therefore 
$$\hat \mu\in C_{\textup{BSE}}^{0}(\Delta(L^{1}(G)))={L^{1}(G \widehat)}.$$
 Which yield:
$$
{M(G\widehat)}\subseteq {L^{1}(G\widehat)}
$$ 
so ${M(G\widehat)}= {L^{1}(G\widehat)}$, thus $M(G)= L^{1}(G)$ and $G$ is discrete. Next, we will prove that 
$$C_{\textup{BSE}}^{0}(\Delta(A))= {M(A\widehat)}= \hat A.$$ 
Since $G$ is discrete, so by applying Lemma \ref{gdis}, $L^{1}(G, A)= M(G, A)$. Thus  $L^{1}(G, A)$ is $\textup{BED}$- algebra. Hence by applying Theorem \ref{tlbed}, $A$ is $\textup{BED}$-  algebra. Therefore
$$
C_{\textup{BSE}}^{0}(\Delta(A)) = \widehat A= M(A\widehat)
$$
So $A$ is  weak-$\textup{BED}$ algebra.
\end{proof}
\begin{thm}
Let $G$ be an Abelian locally compact group and $A$ be a commutative semisimple unital  Banach algebra. Then $L^{1}(G, A)$ is a   weak-$\textup{BSE}$ algebra if and only if $A$ is a   weak-$\textup{BSE}$ algebra and $G$ is discrete.
\end{thm}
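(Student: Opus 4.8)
The plan is to prove the two implications separately, leaning on the tensoring Lemma~\ref{lfc0} and on the fact that, because $A$ is unital, $\widehat{M(A)}=\widehat A$, so ``$\textup{BSE}$'' and ``weak-$\textup{BSE}$'' mean the same thing for $A$; likewise, once $G$ is known to be discrete, $L^{1}(G,A)$ is unital with identity $\delta_{e}\otimes e_{A}$, and the two notions coincide for it as well.

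For the ``if'' part, suppose $A$ is weak-$\textup{BSE}$ and $G$ is discrete. Then $A$ is a $\textup{BSE}$- algebra, hence by the transfer theorem of \cite{AR} (stated above for commutative semisimple unital $A$ and any Abelian locally compact $G$) the algebra $L^{1}(G,A)$ is a $\textup{BSE}$- algebra; since $G$ is discrete, $L^{1}(G,A)$ is unital, so
$$
C_{\textup{BSE}}(\Delta(L^{1}(G,A)))=\widehat{M(L^{1}(G,A))}=\widehat{L^{1}(G,A)},
$$
that is, $L^{1}(G,A)$ is weak-$\textup{BSE}$. This direction is short; the substance is entirely in the cited $\textup{BSE}$- transfer theorem.

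For the ``only if'' part, assume $L^{1}(G,A)$ is weak-$\textup{BSE}$; recall $\Delta(L^{1}(G,A))=\widehat G\times\Delta(A)$ and that $L^{1}(G,A)$ is semisimple (Lemma~\ref{Lsv}), so $C_{\textup{BSE}}(\Delta(L^{1}(G,A)))=\widehat{L^{1}(G,A)}$. I would first show $A$ is weak-$\textup{BSE}$. Given $\sigma\in C_{\textup{BSE}}(\Delta(A))$, apply Lemma~\ref{lfc0}(i) with $\mu=\delta_{e}$; since $\widehat{\delta_{e}}\equiv 1$ on $\widehat G$ this gives $\widehat{\delta_{e}}\otimes\sigma\in C_{\textup{BSE}}(\Delta(L^{1}(G,A)))=\widehat{L^{1}(G,A)}$, so $\widehat{\delta_{e}}\otimes\sigma=\widehat g$ for some $g\in L^{1}(G,A)$. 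Evaluating at $\chi\otimes\varphi$ with $\chi$ the trivial character of $G$ gives $\sigma(\varphi)=\varphi\!\left(\int_{G}g(x)\,dx\right)$ for all $\varphi\in\Delta(A)$, i.e.\ $\sigma=\widehat a$ with $a=\int_{G}g\,dx\in A$. Since $\widehat A\subseteq C_{\textup{BSE}}(\Delta(A))$ always holds, this yields $C_{\textup{BSE}}(\Delta(A))=\widehat A$, so $A$ is weak-$\textup{BSE}$.

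It remains to prove $G$ is discrete, which I expect to be the main point. Fix $\mu\in M(G)$ and apply Lemma~\ref{lfc0}(i) with $\sigma=\widehat{e_{A}}\equiv 1$, obtaining $\widehat\mu\otimes\widehat{e_{A}}\in\widehat{L^{1}(G,A)}$; write $\widehat\mu\otimes\widehat{e_{A}}=\widehat g$ with $g\in L^{1}(G,A)$. Fix $\varphi_{0}\in\Delta(A)$ and put $g_{0}(x):=\varphi_{0}(g(x))$, which lies in $L^{1}(G)$ (this is the corollary $\Delta(A)\circ L^{1}(G,A)=L^{1}(G)$; directly $\int_{G}|\varphi_{0}(g(x))|\,dx\le\|g\|_{1,A}$). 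Then for every $\chi\in\widehat G$,
\begin{align*}
\widehat{g_{0}}(\chi)
&=\varphi_{0}\!\left(\int_{G}\overline{\chi(x)}\,g(x)\,dx\right)
=(\chi\otimes\varphi_{0})(g)=\widehat g(\chi\otimes\varphi_{0})\\
&=(\widehat\mu\otimes\widehat{e_{A}})(\chi\otimes\varphi_{0})=\widehat\mu(\chi),
\end{align*}
so the Fourier transform of $g_{0}\in L^{1}(G)$ coincides with the Fourier--Stieltjes transform of $\mu$; by injectivity of the latter on $M(G)$, $\mu=g_{0}\,dx\in L^{1}(G)$. As $\mu\in M(G)$ was arbitrary, $M(G)=L^{1}(G)$, which forces $G$ to be discrete (as in Lemma~\ref{gdis}). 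The only delicate point in this last step is bookkeeping --- making sure the Gelfand transform on $L^{1}(G,A)$, the identification $\Delta(L^{1}(G,A))=\widehat G\times\Delta(A)$, and the scalar Fourier and Fourier--Stieltjes transforms are all matched on the correct slices $\{\chi\}\times\Delta(A)$; no new estimates beyond those in Lemma~\ref{lfc0} are needed.
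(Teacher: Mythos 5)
Your proof is correct, but the ``only if'' direction takes a genuinely different route from the paper's. The paper argues via multipliers: since $A$ is unital, $L^{1}(G,A)$ has a bounded approximate identity, so $\widehat{M(G,A)}=\widehat{M(L^{1}(G,A))}\subseteq C_{\textup{BSE}}(\Delta(L^{1}(G,A)))=\widehat{L^{1}(G,A)}$, whence by semisimplicity $M(G,A)=L^{1}(G,A)$; discreteness then comes from Lemma~\ref{gdis}, and the weak-$\textup{BSE}$ property of $A$ is obtained indirectly ($G$ discrete $\Rightarrow$ $L^{1}(G,A)$ is $\textup{BSE}$ $\Rightarrow$ $A$ is $\textup{BSE}$ by \cite{AR} $\Rightarrow$ weak-$\textup{BSE}$ since $A$ is unital). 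You instead slice: Lemma~\ref{lfc0}(i) with $\mu=\delta_{e}$ and evaluation at the trivial character gives $C_{\textup{BSE}}(\Delta(A))\subseteq\widehat A$ directly, and Lemma~\ref{lfc0}(i) with $\sigma=\widehat{e_{A}}$ plus the restriction to the slice $\{\chi\}\times\{\varphi_{0}\}$ and uniqueness of the Fourier--Stieltjes transform shows every $\mu\in M(G)$ is absolutely continuous, so $M(G)=L^{1}(G)$ and $G$ is discrete. What your route buys: it stays entirely at the scalar level, needs neither the identification $M(L^{1}(G,A))=M(G,A)$ from \cite{TE} nor Lemma~\ref{gdis} --- which the paper states only for \emph{dual} Banach algebras $A$, a hypothesis absent from this theorem, so your argument actually fits the stated hypotheses more cleanly; the cost is invoking the classical injectivity of the Fourier--Stieltjes transform on $M(G)$, which the paper's multiplier argument does not use explicitly. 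The ``if'' direction is essentially identical in both proofs (the \cite{AR} transfer theorem plus the fact that discreteness makes $L^{1}(G,A)$ unital, so $\textup{BSE}$ and weak-$\textup{BSE}$ coincide).
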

\begin{proof}
Assume that $A$  is a   weak-$\textup{BSE}$ algebra and $G$ is discrete. Since $A$ is unital, then $A$  is a   $\textup{BSE}$- algebra, so due to \cite{AR} is a $\textup{BSE}$ algebra. Thus by using Lemma \ref{lbed}
\begin{align*}
C_{\textup{BSE}}(\Delta(L^{1}(G, A))) = {M(G, A\widehat)}={L^{1}(G, A \widehat)}
\end{align*}
as  a result $L^{1}(G, A)$ is a  weak- $\textup{BED}$ algebra.

   Conversely, if $L^{1}(G, A)$ is a  weak- $\textup{BED}$ algebra, then
$$
C_{\textup{BSE}}(\Delta(L^{1}(G, A))) = {L^{1}(G, A \widehat)}
$$
Since  $L^{1}(G, A)$ has bounded approximate identity, by Lemma \ref{Lsv} 
$$
M(G, A\widehat)= M(L^{1}(G, A)\widehat)\subseteq C_{\textup{BSE}}(\Delta(L^{1}(G, A)))
$$
Hence $M(G, A)= L^{1}(G, A)$, so by Lemma \ref{gdis} $G$ is discrete. Thus $ L^{1}(G, A)$ is a $\textup{BSE}$- algebra.
According to \cite{AR}, $A$ is $\textup{BSE}$- algebra. But $A$ is semisimple and unital, so 
$$
C_{\textup{BSE}}(\Delta(A))= \widehat{M(A)}= \widehat A.
$$
So $A$ is weak-$\textup{BSE}$ algebra.
\end{proof}
\begin{ex}
(i) If $G$ is discrete, then:
$$
M(G, A)=l_{1}(G, A)=  L^{1}(G, A)
$$
Thus $ L^{1}(G, A)$ is a $\textup{BSE}$- algebra [resp. $\textup{BED}$- algebra] if and only if $ L^{1}(G, A)$ is a weak- $\textup{BSE}$ algebra [resp. weak- $\textup{BED}$ algebra].\\
(ii) If $X$ is compact, then $C_{0}(X, A)= C_{b}(X, A)$. Thus $C_{0}(X, A)$ is $\textup{BSE}$- algebra[resp. $\textup{BED}$- algebra] if and only if  $C_{0}(X, A)$ is weak- $\textup{BSE}$ algebra[resp. weak- $\textup{BED}$ algebra].
\end{ex}



\begin{thebibliography}{9999}
\bibitem{AFR}
F. Abtahi, M. Amiri and A. Rejali,   The BSE-property for vector-valued $L_{p}$- algebras, arXiv:2206.07123 [math.FA], (2022).
\bibitem{AK}
F. Abtahi, Z. Kamali, M. Toutounchi,  The Bochner - Schoenberg- Eberline type property for vector-valued Lipschitz algebras, J . Math. Anal. Appl. (2019) 1172–1181.. 
\bibitem{AK2}
F. Abtahi, Z. Kamali, M. Toutounchi,  THE BSE concepts for vector-valued Lipschitz algebras,  Commun. Pure Appl. Anal. 20(3) (2021) 1171–1186.

\bibitem{AP}
F. Abtahi,  A. Pedaran, On characterizations of the image of Gelfand transform and BED property for vector-valued $l^{p}$- algebras, J. Math. Anal. Appl.517 (2023)12-6570. 
\bibitem{MAR}
M. Aghakoochaki and A. Rejali, $C^{*}$- properties of vector-valued Banach algebras, 	arXiv:2212.06131v1 [math.FA], (2022).
\bibitem{AR}
M. Amiri and A. Rejali, BSE- properties of vector-valued group algebras,  	arXiv:2206.11237 [math.FA], (2022).
\bibitem{ARC}
M. Amiri and A. Rejali,  $C^{*}$- algebras structure on vector-valued- Banach algebras, arXiv:2201.00793v1  [math.FA], (2022).
 \bibitem{SB}
S. Bochner, A theorem on Fourier- Stieltjes integrals, Bull. Am. Math. Soc. 40 (1934) 271–276.
\bibitem{D}
H.G. Dales, Automatic continuity: a survey, Bull. London Math. Soc
(1978).
\bibitem{DU}
H.G. Dales, A. Ülger,
Approximate Identities and BSE Norms for Banach Function Algebras, Fields Institute, Toronto,(2014).

\bibitem{DU2}
H.G. Dales, A. Ülger,
Banach function algebras and BSE norms, in: Graduate Course During 23rdBanach Algebra Conference, Oulu, Finland, 2017.
\bibitem{JD}
J. Diestel and J.H. Fourie and J. Swart, The Metric Theory of Tensor Products –
Grothendieck’s R´esum´e Revisited, American Mathematical Society, Providence, 2008.
\bibitem{RD1}
R. Doss, On the Fourier-Stieltjes, transforms of singular or absolutely continuous measures, Math. Z. 97 (1967) 77–84.
\bibitem{RD2}
R. Doss, On the transform of a singular or an absolutely continuous measure, Proc. Am. Math. Soc. 19 (1968) 361–363.

\bibitem{N2}
W. F. Eberlein, Characterizations of Fourier- Stieltjes transforms, Duke Math. J.,22(1955), 465- 468.
\bibitem{F1}
J.Inoue and S-E. Takahasi, On the characterization of the image of the Gelfand transform of commutative Banach algebras, Math. Nachr, 280 (2007), 105- 129.
\bibitem{K1}
E. Kaniuth and A. Ulger, The Bochner - Schoenberg- Eberline property for commutative Banach algebras, especially Fourier- Stieltjes algebras, Trans. Amer. Math. Soc., 362(2010), 4331- 4356.

\bibitem{kan}
E. Kaniuth, A Course in Commutative Banach Algebra, Springer Science, Bussiness Media, LLC 2009.
\bibitem{kos}
I. Kluvanek and Kosice, Characterization of Fourier- Stieltjes transform of vector and operator valued measures, C. Math. J, 17 (1967), 261- 277.
\bibitem{JL}
J.G. Lamadrid, Measures and Tensors, Trans.Amer. Math. Soc. 114, (1965), 98- 121.
\bibitem{PR}
P. Ressel and W. J. Ricker, Vector-valued positive definite functions, the Berg- Maserick theorem, and applications, Math. Scand., 90, (2002), 289- 319.
\bibitem{WR}
W. Rudin, Fourier Analysis on Groups, Interscience Tracts in
Pure and Applied Math., No. 12. Wiley, New York, 1962.


\bibitem{E6}
S. E.Takahasi and O. Hatori, Commutative Banach algebras which satisfy a Bochner- Schonberg- Eberlein- type theorem, Proc.Amer. Math. Soc., 110(1990), 149- 158.
\bibitem{E7}
S.E.Takahasi and O.Hatori, Commutative Banach algebras and BSE- inequalities, Math. Japponica, 37(1992), 47- 52.
\bibitem{E8}
S.E.Takahasi and O.Hatori and K. Tanahashi, Commutative Banach algebras and BSE- norm, Math. Japon,46(1997), 59- 80.
\bibitem{TE}
U. B. Tewari, M. Dutta and D. P. Vaidya, Multipliers of group algebras of vector-valued functions, Proc. Amer. Math. Soc., 81, (1981), 223-229

%
 
\bibitem{klar}
R. Larsen, An Introduction to the Theory of Multipliers, Springer-Verlag, New York, 1971.


\end{thebibliography}
\end{document}